\documentclass{article}
\usepackage{amsmath}
\usepackage{amsfonts}
\usepackage{amsthm}
\usepackage{amscd}
\usepackage{amssymb}
\usepackage{color}
\usepackage{enumerate}

\newtheorem{theorem}{Theorem}[section]
\newtheorem{lemma}[theorem]{Lemma}

\newtheorem{note}[theorem]{Note}
\newtheorem{proposition}[theorem]{Proposition}

\newtheorem{observation}[theorem]{Observation}
\newtheorem{problem}[theorem]{Problem}

\newtheorem{corollary}[theorem]{Corollary}

\newtheorem{question}[theorem]{Question}

\theoremstyle{definition}
\newtheorem{definition}[theorem]{Definition}
\newtheorem{example}[theorem]{Example}
\theoremstyle{remark}
\newtheorem{remark}[theorem]{Remark}

\newcommand{\1}{\mathbf{1}}

\oddsidemargin 0.4truecm   
\evensidemargin 0pt \marginparwidth 40pt \marginparsep 10pt

\topmargin -2.7truecm \headsep 40pt \textheight 23.5truecm
\textwidth 16truecm


\def\1{\mathbf{1}}

\def\mv{\mathcal{V}}
\def\mju{\mathcal{U}}

\def\f2{\mathbb{F}_2}

\def\lip{\hskip0.02cm{\rm Lip}\hskip0.01cm}

\newcommand{\ep}{\varepsilon}

\newcommand{\bbN}{\mathbb{N}}
\newcommand{\al}{\alpha}
\newcommand{\be}{\beta}

\newcommand{\e}{\varepsilon}

\newcommand{\la}{\lambda}

\newcommand{\s}{\sigma}
\newcommand{\vf}{\varphi}

\newcommand{\om}{\omega}










\newcommand{\disp}{\displaystyle}
\newcommand{\lb}{\label}

\newcommand{\wtw}{if and only if}

\newcommand{\DEF}{\buildrel {\mbox{\tiny def}}\over =}

\newcommand\remove[1]{}

\numberwithin{equation}{section}

\begin{document}

\title{On $L_1$-embeddability of unions of $L_1$-embeddable metric spaces and of twisted unions of hypercubes}

\author{Mikhail I.~Ostrovskii and Beata Randrianantoanina}

\date{\today}
\maketitle

\begin{large}

\begin{abstract}
We study properties of twisted unions of metric spaces
introduced by Johnson, Lindenstrauss, and Schechtman, and by Naor
and Rabani. In particular, we prove that under certain natural mild assumptions
 twisted unions of $L_1$-embeddable metric spaces also embed in $L_1$ with distortions
 bounded above by constants that do not depend on the metric spaces themselves, or on their size, but only on certain general parameters. This answers a question stated  by Naor
and  by Naor
and Rabani.

In the second part of the paper we give new
simple examples of metric spaces such their every embedding into  $L_p$, $1\le
p<\infty$, has distortion at least $3$, but which are a union of two subsets, each isometrically embeddable in $L_p$. This extends an analogous result of K.~Makarychev and  Y.~Makarychev from Hilbert spaces to $L_p$-spaces, $1\le
p<\infty$.
\end{abstract}

{\small \noindent{\bf 2020 Mathematics Subject Classification.}
Primary: 46B85; Secondary: 30L05, 46B20, 51F30, 68R12.}\smallskip

{\small \noindent{\bf Keywords.} Banach space, distortion of a
bilipschitz embedding, stable metric space}


\section{Introduction}\label{S:Intro}

One of 
natural general questions about 
metric
spaces is the following:

\begin{question}\label{Q:Gen} Let a metric space $(X,d)$ be a union of its metric subspaces $A$ and $B$. Assume that $A$ and $B$ have
a certain metric property $\mathcal{P}$. Does this imply that  $X$ also has
property $\mathcal{P}$, possibly in some weakened form?
\end{question}

This question can be viewed as a part of a general theme of ``local-global'' properties, when one wants to analyze whether spaces (or other mathematical objects) that have certain properties ``locally'', i.e. on certain subspaces/subsets, also have related properties ``globally'', i.e. on the whole space. The study of the ``local-global'' theme is prevalent in many (if not all) areas of mathematics, including functional analysis, and of theoretical computer science. Questions in the ``local-global'' theme usually assume that {\it all} subspaces/subsets of a specified size satisfy the investigated property.  Question~\ref{Q:Gen}  is different since it assumes that property
$\mathcal{P}$ is satisfied by only one pair of complementing subsets, at least one of which has to be at least half the size of $X$.

We are particularly interested in the embeddability properties of
metric spaces. We are aware of three embeddability properties for
which the answers to  Question~\ref{Q:Gen}   are positive,
interesting, and useful. We  state  them below after recalling the
necessary definitions.


\begin{definition}\label{D:Dist} Let $(X,d_X)$ and $(Y,d_Y)$ be metric spaces. An injective map  $F: X\to Y$ is called a {\it bilipschitz embedding} if there exist constants $C_1,C_2>0$ so that for all $u,v\in X$
\[C_1d_X(u,v)\le d_Y(F(u),F(v))\le C_2d_X(u,v).\]
The {\it distortion}
of $F$ is defined as $\lip(F)\cdot\lip(F^{-1}|_{F(X)})$, where
$\lip(\cdot)$ denotes the Lipschitz constant.

 For
$p\in[1,\infty]$, the {\it
$L_p$-distortion} $c_p(X)$ is defined as the infimum of distortions of all bi-Lipschitz embeddings of $X$ into any space $L_p(\Omega,\Sigma,\mu)$.
\end{definition}

\begin{definition}\label{D:Coarse}
A map $f:(X,d_X)\to (Y,d_Y)$ between two metric spaces is called a
\emph{coarse embedding} if there exist non-decreasing functions
$\rho_1,\rho_2:[0,\infty)\to[0,\infty)$ such that
$\lim_{t\to\infty}\rho_1(t)=\infty$ and
$$\forall u,v\in X~ \rho_1(d_X(u,v))\le
d_Y(f(u),f(v))\le\rho_2(d_X(u,v)).$$
 (Observe that this
condition implies that $\rho_2$ has finite values, but that it does not imply that $f$ is injective.)
\end{definition}

\begin{definition}\label{D:UM} A metric space $(Y,d_Y)$ is called {\it
ultrametric} if  for
any $u,v,w\in Y$
\[d_Y(u,w)\le\max\{d_Y(u,v),d_Y(v,w)\}.\]
\end{definition}

\begin{theorem}[Dadarlat,  Guentner {\cite[Corollary 4.5]{DG07}}] If a metric space
$X$ is a finite union of subsets each admitting  a coarse embedding into
a Hilbert space, then $X$ also admits a coarse embedding into a
Hilbert space.\end{theorem}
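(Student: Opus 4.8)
The plan is to reduce to the union of two subsets and then pass to a convenient reformulation of coarse embeddability into Hilbert space in which the pieces can be glued. By induction on the number of subsets it suffices to treat $X=A\cup B$, where $A$ and $B$ each admit a coarse embedding into a Hilbert space. First I would record the equivalence, via Schoenberg's theorem, between coarse embeddability of a metric space $Y$ into Hilbert space and the existence of a conditionally negative definite kernel $K\colon Y\times Y\to[0,\infty)$ with $K(y,y)=0$ that is proper (i.e. $\inf\{K(y,y')\colon d(y,y')\ge t\}\to\infty$ as $t\to\infty$) and bounded on bounded sets (i.e. $\sup\{K(y,y')\colon d(y,y')\le t\}<\infty$ for each $t$). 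From a coarse embedding $f$ one takes $K(y,y')=\|f(y)-f(y')\|^2$; conversely Schoenberg's theorem realizes any such $K$ as $\|f(y)-f(y')\|^2$, and the growth conditions on $K$ supply the control functions $\rho_1,\rho_2$.

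The difficulty is that negative definite kernels do not glue: there is no canonical way to fill in the cross term between $A\setminus B$ and $B\setminus A$ keeping negative definiteness together with both growth conditions. To get around this I would move to a unit-vector formulation that behaves well under cutting and pasting: $Y$ coarsely embeds into Hilbert space if and only if for every $R>0$ and $\varepsilon>0$ there exist $S>0$ and a map $\xi\colon Y\to H$ with $\|\xi(y)\|=1$ for all $y$, such that $\|\xi(y)-\xi(y')\|\le\varepsilon$ whenever $d(y,y')\le R$, and $\langle\xi(y),\xi(y')\rangle\le\varepsilon$ whenever $d(y,y')\ge S$. The forward direction comes from the Schoenberg fact that $e^{-sK}$ is positive definite for every $s>0$: realizing $e^{-sK(y,y')}=\langle\xi(y),\xi(y')\rangle$ with $\|\xi\|\equiv 1$ and taking $s$ small yields small local displacement, while properness of $K$ yields the large-scale smallness. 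The converse is a weighted sum of the negative definite kernels $(y,y')\mapsto 1-\langle\xi_n(y),\xi_n(y')\rangle$ attached to a sequence of such maps (with $R_n\to\infty$, $\varepsilon_n\to 0$), which for a suitable choice of weights is again proper and bounded on bounded sets; this is routine bookkeeping.

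Granting the characterization, here is the gluing. Fix $R,\varepsilon$ for $X$, apply the characterization to $A$ and $B$ at large local scales $R_A,R_B$ and small $\varepsilon_A,\varepsilon_B$, obtaining $\xi_A\colon A\to H_A$ and $\xi_B\colon B\to H_B$ with thresholds $S_A,S_B$. Extend each to $X$ by a nearest-point rule: choose $\pi_A(x)\in A$ with $d(x,\pi_A(x))$ essentially equal to $d(x,A)$ (and $\pi_A=\mathrm{id}$ on $A$), set $\eta_A(x)=\xi_A(\pi_A(x))$, and similarly $\eta_B$. Using $g(x)=d(x,A)$, a transition width $\tau$, and $u(x)=\min(g(x),\tau)/\tau$, put $\alpha_A(x)=\cos(\tfrac{\pi}{2}u(x))$ and $\alpha_B(x)=\sin(\tfrac{\pi}{2}u(x))$, so that $\alpha_A^2+\alpha_B^2\equiv 1$, with $\alpha_A\equiv 1$ on $A$ and $\alpha_A\equiv 0$ where $g\ge\tau$. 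Then $\xi(x)=\alpha_A(x)\eta_A(x)\oplus\alpha_B(x)\eta_B(x)\in H_A\oplus H_B$ is a unit vector. For the local estimate I would split, in each summand, $\|\alpha_A(x)\eta_A(x)-\alpha_A(y)\eta_A(y)\|\le|\alpha_A(x)-\alpha_A(y)|+\alpha_A(y)\|\eta_A(x)-\eta_A(y)\|$: the first term is small because $\alpha_A$ is $O(1/\tau)$-Lipschitz and $\tau\gg R$, while the second is controlled because wherever $\alpha_A(y)$ is non-negligible the point $y$ lies within $\tau$ of $A$, so the projected points are within $R_A$ and the property of $\xi_A$ applies, and where $\alpha_A(y)$ is negligible it simply damps the bounded difference. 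The large-scale estimate is analogous, since $\langle\xi(x),\xi(y)\rangle=\alpha_A(x)\alpha_A(y)\langle\eta_A(x),\eta_A(y)\rangle+\alpha_B(x)\alpha_B(y)\langle\eta_B(x),\eta_B(y)\rangle$ and for $d(x,y)$ large the relevant projected points are at distance at least $S_A$ (resp. $S_B$) whenever the corresponding weight is non-negligible.

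The heart of the matter, and the step I expect to be the main obstacle, is exactly this interface handling: negative definiteness is not preserved by the naive juxtaposition of $K_A$ and $K_B$, and a nearest-point extension of a coarse embedding is neither Lipschitz nor bounded, so the embeddings themselves cannot be glued. Passing to unit vectors is what makes the partition-of-unity damping harmless, since multiplying a norm-one field by a scalar in $[0,1]$ never increases distances uncontrollably; the remaining care is only in balancing the competing demands that the transition be wide (so $\alpha_A,\alpha_B$ vary slowly across scale $R$) against the need to take the local scales $R_A,R_B$ large relative to $\tau$ and the final threshold $S$ large relative to $S_A,S_B$ and $\tau$. These can be met simultaneously, so the combined maps witness the characterization for $X$, and hence $X$ coarsely embeds into a Hilbert space.
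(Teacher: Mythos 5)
The paper does not prove this statement; it is quoted verbatim from Dadarlat--Guentner \cite[Corollary 4.5]{DG07}, so there is no in-paper proof to compare against. Your argument is correct and is essentially the proof in that cited source: the unit-vector reformulation of coarse embeddability (their Proposition 2.1, obtained via Schoenberg's theorem applied to $e^{-tK}$) and the gluing of the two unit-vector fields by a $\cos/\sin$ partition of unity built from $d(\cdot,A)$, with approximate nearest-point retractions onto $A$ and $B$, is exactly their mechanism; the only points needing (routine) care are the ones you already flag --- using approximate nearest points when exact ones need not exist, noting that $\alpha_B(x)\neq 0$ forces $x\in B$ so $\pi_B$ is the identity there, and choosing the parameters in the order $R\ll\tau\ll R_A,R_B$ and $S\gg S_A,S_B,\tau$.
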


\begin{theorem}[Mendel, Naor {\cite[Theorem 1.4]{MN13}}]\label{T:MN13} Let a metric space $(X,d)$ be a union of its metric subspaces $A$ and $B$.
Assume that $A$ and $B$ embed into, possibly different,
ultrametric spaces with distortions $D_A$ and $D_B$, respectively.
Then the metric space $X$ embeds into an ultrametric space with
distortion at most $(D_A+2)(D_B+2)-2$.
\end{theorem}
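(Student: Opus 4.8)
The plan is to reduce the statement to the construction of a single ultrametric on $X$ and then to produce that ultrametric as the \emph{subdominant (minimax) ultrametric} of $d$, controlling its distortion by a two-stage contraction argument.

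First I would normalize the hypotheses. Since $A$ embeds into an ultrametric with distortion $D_A$, pulling back and rescaling the embedding yields an ultrametric $\rho_A$ on $A$ with $d(u,v)\le \rho_A(u,v)\le D_A\,d(u,v)$ for all $u,v\in A$, and similarly an ultrametric $\rho_B$ on $B$ with $d\le\rho_B\le D_B\,d$ on $B$. I would also record the elementary reduction: it suffices to exhibit \emph{one} ultrametric $\rho$ on the whole set $X$ satisfying $\tfrac1C\,d\le\rho\le d$ with $C=(D_A+2)(D_B+2)-2$, because then the identity map $(X,d)\to(X,\rho)$ is a bijection onto an ultrametric space whose distortion is at most $C$.

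For the candidate I would take $\rho$ to be the subdominant ultrametric of $d$, i.e. $\rho(x,y)=\inf\{\max_{0\le i<k} d(p_i,p_{i+1})\}$, where the infimum runs over all finite chains $x=p_0,p_1,\dots,p_k=y$ in $X$. It is standard that $\rho$ is an ultrametric and that $\rho\le d$ (use the one-step chain), so the only thing left to prove is the lower bound $\rho\ge d/C$. Equivalently, the whole theorem comes down to the following key estimate: \emph{if $x=p_0,\dots,p_k=y$ is any chain with $d(p_i,p_{i+1})\le m$ for every $i$, then $d(x,y)\le C m$}; granting this for every chain and taking the infimum over chains gives $\rho(x,y)\ge d(x,y)/C$, and the reduction above finishes the proof.

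The heart of the matter, and the step I expect to be the main obstacle, is this key estimate, since the chain may alternate between $A$ and $B$ arbitrarily many times and a naive triangle-inequality bound would pay for each alternation. I would color each $p_i$ by membership in $A$ or $B$ (assigning points of $A\cap B$ to $A$, say) and contract in two stages, exploiting the \emph{strong} triangle inequality. In the first stage I fix a maximal run of consecutive $B$-colored points: consecutive points have $\rho_B\le D_B\,d\le D_B m$, so by the ultrametric inequality the two ends of the run have $\rho_B\le D_B m$, hence $d\le D_B m$; together with the two bordering cross-edges (each of $d$-length $\le m$) this shows that the two $A$-points flanking the run are at $d$-distance at most $(D_B+2)m$. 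Consequently the subsequence of all $A$-colored points of the chain is itself a chain \emph{inside $A$} with consecutive gaps $\le (D_B+2)m$. In the second stage I apply the same strong-triangle contraction inside $A$: consecutive $A$-points have $\rho_A\le D_A(D_B+2)m$, so the first and last $A$-points $\alpha,\omega$ satisfy $d(\alpha,\omega)\le \rho_A(\alpha,\omega)\le D_A(D_B+2)m$. Finally I attach the (at most one) leading and trailing $B$-runs to reach $x$ and $y$, each costing at most $(D_B+1)m$; adding up,
\[
d(x,y)\le (D_B+1)m+D_A(D_B+2)m+(D_B+1)m=\bigl(D_AD_B+2D_A+2D_B+2\bigr)m=\bigl((D_A+2)(D_B+2)-2\bigr)m,
\]
which is exactly $Cm$. (If the chain has no $A$-points, or if $x,y$ lie in $A$, the corresponding terms drop and the bound only improves.) This establishes the key estimate, and hence the theorem.
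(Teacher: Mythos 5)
Your proposal is correct, but note that the paper does not prove this statement at all: Theorem~\ref{T:MN13} is imported verbatim from Mendel and Naor \cite[Theorem 1.4]{MN13} and used as a black box, so there is no in-paper argument to compare against. Your reconstruction via the subdominant (minimax) ultrametric is sound and self-contained. I checked the key estimate: collapsing each maximal $B$-run by the strong triangle inequality for $\rho_B$ costs $D_Bm$ plus the two cross-edges, giving gaps of at most $(D_B+2)m$ between consecutive $A$-points; collapsing the resulting $A$-chain costs $D_A(D_B+2)m$; and the two boundary $B$-runs cost $(D_B+1)m$ each, for a total of $D_AD_B+2D_A+2D_B+2=(D_A+2)(D_B+2)-2$, exactly the claimed constant. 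The edge cases (chain entirely in $B$, endpoints in $A$) only improve the bound, and the reduction to producing a single ultrametric $\rho$ with $d/C\le\rho\le d$ is legitimate; the one point worth stating explicitly is that positivity of $\rho$ on distinct pairs (so that $\rho$ is a genuine ultrametric rather than a pseudo-ultrametric) is itself a consequence of the lower bound you prove, not something to invoke beforehand. This is the standard route to the Mendel--Naor constant, and nothing in your argument needs repair.
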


\begin{theorem}[K.~Makarychev, Y.~Makarychev \cite{MM16}]\label{T:MM}
Suppose that a metric space $(X,d)$ is the union of
two metric subspaces  $A$ and $B$ that embed into $\ell_2^a$ and
$\ell_2^b$ (where $a$ and $b$ may be finite or infinite) with distortions $D_A$ and $D_B$, respectively. Then
$X$ embeds into $\ell_2^{a+b+1}$ with distortion
$D\le 7D_AD_B + 2(D_A+D_B).$

\noindent
If $D_A=D_B = 1$, then $X$ embeds into $\ell_2^{a+b+1}$ with
distortion at most $8.93$.
\end{theorem}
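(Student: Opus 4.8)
The plan is to build a single embedding
\[
F\colon X\to \ell_2^a\oplus_2\ell_2^b\oplus_2\R=\ell_2^{a+b+1},\qquad
F(x)=\big(\Phi_A(x),\ \Phi_B(x),\ \gamma\,\dist(x,A)\big),
\]
where $\Phi_A$ restricts to $f_A$ on $A$, $\Phi_B$ restricts to $f_B$ on $B$, the last real coordinate is a ``gluing'' term that vanishes on $A$, and $\gamma>0$ is a parameter to be optimized at the end. After rescaling I may assume that $f_A$ and $f_B$ are non-contractive with upper Lipschitz constants $D_A$ and $D_B$, i.e. $d(u,v)\le\|f_A(u)-f_A(v)\|\le D_A\,d(u,v)$ for $u,v\in A$, and analogously for $f_B$ on $B$. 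The design principle is that the first block resolves distances inside $A$, the second block distances inside $B$, and the single extra coordinate $\dist(\cdot,A)$, together with the first block, resolves the only remaining distances: those between $A\setminus B$ and $B\setminus A$.

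The first real step is to produce the off-diagonal values, $\Phi_A$ on $B$ and $\Phi_B$ on $A$. Here the crucial observation is that \emph{each piece is already sitting inside a Hilbert space}: restricting $f_A$ to $A\cap B$ and transporting it through $f_B^{-1}$ yields a map $f_A\circ f_B^{-1}$ defined on $f_B(A\cap B)\subseteq\ell_2^b$, a subset of a Hilbert space, with Lipschitz constant at most $D_A$ (because $f_B^{-1}$ is $1$-Lipschitz on its image and $f_A$ is $D_A$-Lipschitz). Since the domain now lies in a Hilbert space, Kirszbraun's extension theorem applies and gives an extension $G\colon\ell_2^b\to\ell_2^a$ with the same constant $D_A$; setting $\Phi_A:=G\circ f_B$ on $B$ yields values on all of $B$ that agree with $f_A$ on $A\cap B$ and satisfy $\|\Phi_A(x)-\Phi_A(y)\|\le D_AD_B\,d(x,y)$ for $x,y\in B$. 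The symmetric construction produces $\Phi_B$ on $A$. This composition through the ambient Hilbert space is exactly what forces the product factor $D_AD_B$ into the distortion.

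Next I would verify the lower bound $\|F(u)-F(v)\|\ge c\,d(u,v)$. If $u,v\in A$ the first block alone gives $\|\Phi_A(u)-\Phi_A(v)\|=\|f_A(u)-f_A(v)\|\ge d(u,v)$, and symmetrically for $u,v\in B$; these cases are immediate. The essential case is $u\in A\setminus B$, $v\in B\setminus A$. Writing $t=\dist(v,A)$ and letting $a_v\in A$ be a nearest point of $A$ to $v$, the triangle inequality and non-contractivity of $f_A$ give
\[
\|\Phi_A(u)-\Phi_A(v)\|\ \ge\ \|f_A(u)-f_A(a_v)\|-\|\Phi_A(a_v)-\Phi_A(v)\|\ \ge\ \big(d(u,v)-t\big)-D_AD_B\,t,
\]
where the last inequality uses the Lipschitz control of $\Phi_A$ across the pair $(a_v,v)$ discussed below; meanwhile the gluing coordinate contributes $\gamma\,|\dist(u,A)-\dist(v,A)|=\gamma t$. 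Since $\|F(u)-F(v)\|$ dominates the larger of these two quantities and $t\in[0,d(u,v)]$, minimizing $\max\{\,d(u,v)-(1+D_AD_B)t,\ \gamma t\,\}$ over $t$ yields a lower bound of the form $\tfrac{\gamma}{\gamma+1+D_AD_B}\,d(u,v)$.

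For the upper bound one uses that each block is Lipschitz ($D_A$, respectively $D_B$, on its native set, $D_AD_B$ on the transported set, and the gluing coordinate $1$-Lipschitz), so that $\|F(u)-F(v)\|\le\sqrt{(D_AD_B)^2+(D_AD_B)^2+\gamma^2}\;d(u,v)$ whenever $u,v$ lie in a common piece. The main obstacle is precisely the mixed pairs $u\in A\setminus B$, $v\in B\setminus A$: there neither $\Phi_A$ nor $\Phi_B$ is evaluated on a pair contained in a single piece, so one must control the transported values $\Phi_A(v)=G(f_B(v))$ and $\Phi_B(u)$ relative to $d(u,v)$ — this is the very estimate $\|\Phi_A(a_v)-\Phi_A(v)\|\le D_AD_B\,t$ invoked above. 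Bounding these mixed evaluations \emph{simultaneously} from above (for the Lipschitz estimate) and from below (for the lower bound) is the technical heart of the argument; I expect to handle it by routing through near-optimal intermediate points of $B$ (respectively $A$) and absorbing the residual $\dist(\cdot,A)$-discrepancy into the gluing coordinate, at the cost of worse constants than the clean within-piece bounds suggest. Finally, collecting the upper Lipschitz constant and the lower bound $\gamma/(\gamma+1+D_AD_B)$ — with the binding constraints coming from the mixed pairs — and optimizing $\gamma$ produces a distortion of the stated form $7D_AD_B+2(D_A+D_B)$; in the isometric case $D_A=D_B=1$ the same optimization carried out sharply yields the numerical value $8.93$.
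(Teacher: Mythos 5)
First, a point of reference: the paper does not prove Theorem~\ref{T:MM} at all --- it is quoted from \cite{MM16} as a known result --- so your proposal can only be measured against the argument of \cite{MM16} itself.

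Your outline has a genuine gap, located exactly where you flag ``the technical heart'': the estimate $\|\Phi_A(a_v)-\Phi_A(v)\|\le D_AD_B\,d(a_v,v)$ for $a_v\in A$, $v\in B\setminus A$, which you need both for the upper Lipschitz bound on mixed pairs and for the lower bound. The Kirszbraun extension $G$ of $f_A\circ f_B^{-1}$ controls $\Phi_A=G\circ f_B$ only \emph{inside} $B$ and pins it to $f_A$ only on $A\cap B$; it gives no control whatsoever on $\|f_A(a)-G(f_B(v))\|$ for $a\in A\setminus B$, because the distances between $A\setminus B$ and $B\setminus A$ are simply not part of the data of the extension problem (they are not encoded in $f_B(B)\subseteq\ell_2^b$ together with $f_A|_{A\cap B}$). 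Concretely, let $A=\{a_0,\dots,a_n\}$, $B=\{b_0,\dots,b_n\}$ with $a_0=b_0$, $d(a_i,a_j)=d(b_i,b_j)=|i-j|$ and $d(a_i,b_j)=\min\{|i-j|+1,\,i+j\}$, and take $f_A(a_i)=i$, $f_B(b_j)=j$ as isometries into $\mathbb{R}$ (so $D_A=D_B=1$). Then $f_A\circ f_B^{-1}$ is defined only at the point $0$, and $G\equiv 0$ is a perfectly legitimate $1$-Lipschitz Kirszbraun extension; with this choice $\Phi_A(b_n)=0$ while $\Phi_A(a_n)=n$, although $d(a_n,b_n)=1$, so the first block of $F$ alone forces distortion at least $n$ for every choice of $\gamma$. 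If $A\cap B=\emptyset$ the extension is entirely arbitrary and the situation is worse. Your proposed repair --- routing through near-optimal intermediate points and absorbing the error into the $\dist(\cdot,A)$ coordinate --- cannot work, because the only points at which $\Phi_A$ is anchored to $f_A$ lie in $A\cap B$, and $\dist(v,A\cap B)$ can be arbitrarily larger than both $\dist(v,A)$ and $d(u,v)$.

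This is precisely why Theorem~\ref{T:MM} is not a routine corollary of Kirszbraun's theorem. The key ingredient of \cite{MM16} is a strengthened, ``self-improving'' extension lemma: a $1$-Lipschitz map $f:S\to H'$ defined on $S\subseteq H$ admits a $1$-Lipschitz extension $\tilde f$ which, roughly, satisfies in addition
\[
\|\tilde f(x)-f(y)\|^2\;\le\;\|x-y\|^2-\dist(x,S)^2\qquad(x\in H,\ y\in S),
\]
i.e.\ the augmented map $x\mapsto(\tilde f(x),\dist(x,S))$ remains non-expanding; it is this extra decay toward $S$, fed into the added real coordinate, that produces the cross-set upper and lower bounds your sketch assumes for free, and the constants $7D_AD_B+2(D_A+D_B)$ and $8.93$ emerge from that analysis. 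Until you formulate and prove such a lemma (or supply some other mechanism that ties the values of $\Phi_A$ on $B$ to the $A$-to-$B$ distances in $X$), the rest of your outline --- the case analysis, the optimization over $t$ and over $\gamma$ --- rests on an estimate that is false for the construction as described.
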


\begin{remark}
We note that there is an extensive literature on the property of $L_1$-embedda\-bi\-lity within the  ``local-global'' theme.
For example, Arora, Lov\'asz, Newman, Rabani, Rabinovich and Vempala \cite{ALNRRV12} asked
what is the least distortion with which one can embed the metric space $X$ into $L_1$, given that
 every subset of $X$ of cardinality $k$ is  embeddable into $L_1$ with distortion at most $D$.
 An 
 answer to this question was given by  Charikar, K.~Makarychev, and Y. Makarychev \cite{CMM10}, who proved, among other results, that if even if a small fraction $\alpha$ (say 1\%) of all subsets of size $k$ of a metric space $X$, with $|X|=n$, embeds into
$\ell_p$ with distortion at most $D$, then  the entire space $X$ embeds into $\ell_p$, $1\le p<\infty$,  with distortion
at most
$D \cdot O(\log(n/k) + \log \log(1/\alpha) + \log p)$. In particular, if $k$ is proportional to $n$, then one obtains a bounded distortion embedding of $X$ into $\ell_p$.

On the other hand, 
 there exist   absolute constants
$a,b,A,B>0$  such that for every $N\in\bbN$ there exists an $N$-point metric space $X$ such that
every subset of $X$ of size at most  $A(\exp(\log \log N)^a)$
 embeds isometrically into $L_1$, but every embedding of $X$ into $L_1$ requires distortion at least $B(\log \log N)^b$, see  \cite{RS09} and  the expositions in
 \cite[Section~1.3]{KV15} and \cite[Section~4.4]{Tre12}.
\end{remark}

In connection with Theorem \ref{T:MM} it became   natural to
investigate the problem whether analogous results are valid  for metric spaces embeddable into $L_p$, when $p\ne 2$, explicitly stated e.g. in \cite[Remark
4.2]{MN13}, \cite[Question 5]{MM16}, \cite[Open Problem
9.6]{Nao15}, and \cite[Remark 18]{NR17}.

\begin{problem}\label{P:Lp}
Suppose $(X, d)$ is a metric space and $X = A\cup B$, with $c_p(A)$ and $c_p(B)$ finite. Does this imply that $c_p(X)$ is finite? Can  $c_p(X)$ be bounded from above only in terms of $c_p(A)$ and $c_p(B)$?
\end{problem}

It is easy to see that the answer is positive for $p=\infty$.
Theorem \ref{T:MM} states that the answer is positive for $p=2$.
K.~Makarychev and  Y.~Makarychev  \cite[Question 5]{MM16}
conjectured that the answer is negative for every $p\in[1,\infty]$
except $2$ and $\infty$.
\medskip

Problem~\ref{P:Lp} is particularly interesting in the case of
$p=1$. In this case, in addition to the Makarychev-Makarychev conjecture of the negative answer to Problem~\ref{P:Lp}, since 2015 in the literature  there were conjectures that a construction known as a twisted union of hypercubes might be a possible method of constructing a family of counterexamples.

\begin{problem}[{\rm Naor \cite[Open problem 3.3]{Nao15}, Naor, Rabani \cite[Remark~18]{NR17}}]\label{P:TwistCube}
 Must any embedding of a  twisted union of hypercubes described in Examples~\ref{E:ConNR&N} and \ref{E:GenNR&N} below into $L_1$  incur a bi-Lipschitz distortion that tends to $\infty$ as the size of the hypercube tends to $\infty$?

\noindent That is, does a twisted union of hypercubes
  give a
negative answer to Problem~\ref{P:Lp} in the case $p=1$?
\end{problem}

The idea of the construction of a twisted union of metric spaces
can be traced back to \cite{Lin64} and has been used in
\cite{JLS86} and \cite{NR17} to provide examples that demonstrate that for $\al\in(1/2,1]$, the $\al$-extension constants from $\ell_\infty$ to $\ell_2$ are not bounded. 
Variants of
this construction were also used in \cite{Lan99, CKR04}.


The most general idea is explained in \cite[Remark 19]{NR17}, and
is as follows: Suppose that $(X, d_X)$ and $(Y, d_Y)$ are finite
metric spaces   with $X$ and $Y$ disjoint as sets. Given mappings
$\s:X\to Y$ and $r:X\to (0,\infty)$, we define the weighted graph
structure on $X\cup Y$ by defining the following weighted edges:
If $x_1, x_2 \in X$ then  $x_1$ and $x_2$ are joined by an edge of
weight $d_X(x_1, x_2)$; if $y_1, y_2 \in Y$ then  $y_1$ and $y_2$
are joined by an edge of weight $d_Y(y_1, y_2)$. Also, for every
$x \in X$, the elements $x$ and $\s(x)$ are joined by an edge of
weight $r(x)$. We endow $X\cup Y$ with the shortest-path metric
induced by this weighted graph.

Naor and Rabani point out that all metric spaces that they
construct in \cite{NR17} to exhibit a maximal unbounded growth of
certain extension constants can be described as subsets of this
general construction, and they indicate that usefulness of this
construction is probably yet to be fully explored.

One of the main goals of the present paper is to start an exploration of general twisted unions of metric spaces. Under certain natural restrictions on the metrics of the original spaces and the function $r:M\to (0,\infty)$ we give detailed formulas for the induced metric of the twisted unions,
see Section~\ref{S:Definition}.

We then study the embedding properties of twisted unions. Somewhat to our surprise, we answer Problem~\ref{P:TwistCube} negatively. In fact we prove a number of results (see Theorems~\ref{fixedr-omega}, \ref{T:rLipschitz}, \ref{T:Basic}, and Corollaries~\ref{fixedr-omega-1}, \ref{fixedr})
establishing that all twisted unions of $L_1$-embeddable spaces that satisfy certain natural mild assumptions, also embed into $L_1$ with distortions bounded by constants that depend only on certain general parameters and not on the metrics themselves or the size of the sets.

While we do not obtain the ultimate answer to Problem~\ref{P:Lp}, we provide a fairly large new class of metric spaces for which the answer to Problem~\ref{P:Lp} is affirmative. While it is interesting to search for a counterexample to Problem~\ref{P:Lp} (which is widely believed to exist), identifying classes of spaces for which the answer to Problem~\ref{P:Lp} is affirmative may have a greater potential for future applications. For this reason we investigate here what  conditions on twisted unions assure their $L_1$-embeddability, see
Sections~\ref{S:twistedemb} and \ref{S:gentwistemb}.

\remove{
Our first main goal is to solve in the negative
Problem~\ref{P:TwistCube} for the twisted union of hypercubes
constructions suggested in \cite{Nao15} and \cite{NR17}. We do
this in somewhat more general cases. We relate our results with
the mentioned constructions at the beginning of Section
\ref{S:Basic} for the construction of \cite[Open problem
3.3]{Nao15} and in Corollary \ref{fixedr-omega-1} for the
construction of \cite[Remark~18]{NR17}. So the twisted unions of
hypercubes constructions suggested in \cite{Nao15} and \cite{NR17}
do embed into $L_1$ with uniformly bounded distortions and thus do
not provide a counterexample  to Problem~\ref{P:Lp} for $p=1$.}

In Section~\ref{S:lowerbound}, we   show that the lower bound on
distortion of the embedding of a union of metric spaces  found by
K.~Makarychev and  Yu.~Makarychev \cite[Theorem 1.2 and Section
3]{MM16} for the Hilbert space, is also valid for all $L_p$ with
$1\le p <\infty$, and for many other Banach and metric spaces. Our
proof uses the theory of stable metric spaces (see
Definition~\ref{D:Stable}) and our examples are infinite metric
spaces. For spaces whose stability is known our proof is very
simple (see Example~\ref{E:UnstUnion}).

\section{Preliminary facts and notation}

We use the standard terminology of the theories of Banach Spaces
and Metric Embeddings,  see \cite{BL00, Ost13}.

Suppose that  $K\ge1$,  $M$ is a set, and $f:M\times M\to[0,\infty)$ is an arbitrary function that is not necessarily a metric on $M$,  such that there exists a map $\Psi:M\to L_1$ such that  for all $x,y\in M$, we have
\[f(x,y)\le\|\Psi(x)-\Psi(y)\|\le Kf(x,y).\]
In this situation, even if the function $f$ is not a metric (we even allow the function $f$ to equal to 0 on an arbirtary subset of $M\times M$), with a slight abuse of notation, we will say that $(M,f)$ embeds in $L_1$ and   write  $c_1(M,f)\le K<\infty$.

Please see Remark~\ref{R:semimetrics} for a brief discussion of functions $f$ that can satisfy this condition.

We will  need
the following two results of Mendel and Naor \cite{MN15}.

\begin{theorem} \label{T:trunc}
For every $\la>0$, $L_1$ with the truncated metric
$$\varrho(x,y)=\min\{\la,\|x-y\|_1\}$$
embeds into $L_1$ with distortion not exceeding $e/(e-1)$.
\end{theorem}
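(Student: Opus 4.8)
The plan is to sandwich the truncated metric $\varrho$ between scalar multiples of an auxiliary $L_1$-embeddable metric obtained by applying a concave saturating function to the $L_1$-norm. Concretely, set
$$\sigma(x,y)=\lambda\left(1-e^{-\|x-y\|_1/\lambda}\right),$$
and I would prove two things: (i) $(L_1,\sigma)$ embeds isometrically into $L_1$, and (ii) $\sigma(x,y)\le \varrho(x,y)\le \frac{e}{e-1}\,\sigma(x,y)$ for all $x,y$. Granting these, composing the isometric embedding from (i) with the two-sided estimate (ii) yields an embedding of $(L_1,\varrho)$ into $L_1$ whose distortion is at most the ratio of the constants in (ii), namely $e/(e-1)$.

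Step (ii) is a one-variable computation. By homogeneity (writing $L=\|x-y\|_1\ge 0$ and scaling out $\lambda$, so that we may set $\lambda=1$) it amounts to showing that $g(L):=\min\{1,L\}/(1-e^{-L})$ satisfies $1\le g(L)\le e/(e-1)$ for $L>0$. On $(0,1]$ one has $g(L)=L/(1-e^{-L})$, which increases from $1$ (as $L\to 0^{+}$) to $e/(e-1)$ at $L=1$, since its derivative has the sign of $e^{L}-(1+L)>0$; on $[1,\infty)$ one has $g(L)=1/(1-e^{-L})$, which decreases from $e/(e-1)$ back to $1$. Hence the maximum of $g$ equals $e/(e-1)$, attained at $L=1$ (that is, at $\|x-y\|_1=\lambda$), and (ii) follows.

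The heart of the argument is (i), for which the cleanest route I would take is a Poisson random-partition (cut) construction that also explains the constant $e/(e-1)$. Since the identity embeds $L_1$ into $L_1$, fix a cut representation $\|x-y\|_1=\int_\Omega|\phi_\omega(x)-\phi_\omega(y)|\,d\omega$ (on each real coordinate $x\mapsto\mathbf{1}_{(-\infty,x]}$ realizes $|s-t|$ as the Lebesgue measure of the thresholds separating $s$ and $t$). Run a Poisson process of thresholds on $\Omega\times\mathbb{R}$ with intensity $\tfrac1\lambda\,(d\omega\otimes dt)$; each point $(\omega,t)$ determines a cut, and the probability that \emph{no} cut separates $x$ from $y$ is $\exp\!\big(-\tfrac1\lambda\int_\Omega|\phi_\omega(x)-\phi_\omega(y)|\,d\omega\big)=e^{-\|x-y\|_1/\lambda}$, so $x$ and $y$ are separated with probability $1-e^{-\|x-y\|_1/\lambda}$. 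For each realization the pseudometric $\delta_\omega(x,y)=\mathbf{1}[x,y\text{ lie in different atoms of the generated partition}]$ is isometrically $L_1$-embeddable (send a point to the indicator of its atom in $\ell_1$). Averaging over realizations, $x\mapsto(\text{atom indicator of }x)$ realizes $\int\delta_\omega\,d\mathbb{P}(\omega)=1-e^{-\|x-y\|_1/\lambda}$ as an $L_1$ distance, and scaling by $\lambda$ gives the desired isometric embedding of $\sigma$.

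The main obstacle is making (i) fully rigorous in general (infinite-dimensional, possibly non-separable) $L_1$: producing the cut representation, checking measurability of the Poisson construction, and verifying that the generated partition has well-defined atoms so that $\delta_\omega$ is genuinely $L_1$-embeddable. If one prefers to avoid these technicalities, the same conclusion follows from the purely analytic observation that $t\mapsto\lambda(1-e^{-t/\lambda})$ is a Bernstein function vanishing at $0$ (its derivative $e^{-t/\lambda}$ is completely monotone), together with the known stability of isometric $L_1$-embeddability under composition of an $L_1$ metric with such a function; either route reduces (i) to the single fact that $1-e^{-\|x-y\|_1/\lambda}$ is $L_1$-embeddable.
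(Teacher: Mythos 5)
The paper does not prove this statement: it is quoted verbatim from Mendel and Naor \cite{MN15}, so there is no in-paper proof to compare against. Judged on its own, your argument is correct in structure and is essentially the standard (Mendel--Naor) route. The reduction to the auxiliary metric $\sigma(x,y)=\lambda\bigl(1-e^{-\|x-y\|_1/\lambda}\bigr)$ and the calculus verification that $1\le \min\{1,L\}/(1-e^{-L})\le e/(e-1)$ are both right (the derivative sign via $e^{L}>1+L$ and the matching of the two branches at $L=1$ check out), and they correctly convert an \emph{isometric} embedding of $(L_1,\sigma)$ into the claimed distortion bound $e/(e-1)$.

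The entire mathematical content therefore sits in step (i), and this is where your write-up is a sketch rather than a proof, as you acknowledge. Two points deserve emphasis. First, it is essential that $(L_1,\sigma)$ embed \emph{isometrically} (or at least with distortion $1$ in the limit); any loss here multiplies the final constant, so the Poisson/random-partition construction cannot be replaced by a merely bilipschitz surrogate. Your verification that ``no Poisson point lies between the graphs'' is an equivalence relation (transitivity follows because the region between $x$ and $z$ is contained in the union of the regions between $x,y$ and between $y,z$ coordinatewise) is the right key observation, and the expectation computation gives exactly $1-e^{-\|x-y\|_1/\lambda}$. Second, the cleanest rigorous path is the one you mention at the end: for a \emph{finite} subset $S\subset L_1$ write $\|x-y\|_1$ as a nonnegative combination of cut semimetrics, observe that $1-e^{-d}$ is then an average of partition semimetrics (each of which lies in the cut cone), hence $L_1$-embeddable isometrically on $S$; then pass from finite subsets to all of $L_1$ by the standard finite-determinacy of isometric $L_1$-embeddability (ultraproducts of $L_1$-spaces are $L_1$-spaces). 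This bypasses the measurability issues of a global Poisson construction. With that route spelled out, your proof is complete and matches the cited source in spirit; as written, the gap is the unproved assertion of isometric embeddability of $\sigma$, which is true and standard but is precisely the nontrivial part of the theorem.
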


This is a powerful result which, using  the
theory of concave functions of Brudnyi and Krugljak (see
\cite[Section 3.2]{BK91} and \cite[Remark 5.4]{MN04}),  implies:

\begin{corollary}\label{C:concave}{\rm \cite{MN15}}
There exists a universal constant $\Delta\le (2\sqrt{2}+3)e/(e-1)<10$,
such that if $\om:[0,\infty)\to[0,\infty)$ is any concave
non-decreasing function with $\om(0)=0$ and $\om(t)>0$ for $t>0$,
then the metric space $(L_1,\om(\|x-y\|_1))$ embeds into $L_1$
with distortion at most $\Delta$.
\end{corollary}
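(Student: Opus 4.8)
The plan is to realize $\om$ as a superposition of the elementary concave functions $t\mapsto\min\{\la,t\}$, to which Theorem~\ref{T:trunc} applies directly, and then to assemble the individual embeddings into one map, using that an $\ell_1$-sum (or $L_1$-direct integral) of $L_1$-spaces is again an $L_1$-space.

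First I would record the integral representation of a concave non-decreasing $\om$ with $\om(0)=0$. Writing $c:=\om(0^+)\ge0$ for the jump at the origin and $\om'(\infty):=\lim_{t\to\infty}\om'(t)\ge0$ for the asymptotic slope, the right derivative $\om'$ is non-increasing on $(0,\infty)$, so $t\mapsto\om'(t)-\om'(\infty)$ is the tail $\nu((t,\infty))$ of a (possibly infinite) Borel measure $\nu$ on $(0,\infty)$. Integrating the identity $\om(t)=c+\int_0^t\om'(s)\,ds$ and applying Fubini yields
\[
\om(t)=c\cdot\1_{\{t>0\}}+\om'(\infty)\,t+\int_{(0,\infty)}\min\{\la,t\}\,d\nu(\la),
\]
and finiteness of $\om$ guarantees that the integral converges for each $t$. (This is exactly the type of decomposition furnished by the Brudnyi--Krugljak theory; when $\om$ is continuous at $0$ the term $c\cdot\1_{\{t>0\}}$ disappears.)

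Next I would embed each piece into $L_1$. The linear term $\om'(\infty)\|x-y\|_1$ is realized isometrically by the scaled identity $L_1\to L_1$; the equilateral term $c\cdot\1_{\{\|x-y\|_1>0\}}$ is realized isometrically by $x\mapsto\tfrac{c}{2}e_x$ into $\ell_1$ over the points of $L_1$; and for each scale $\la$, Theorem~\ref{T:trunc} provides $\Psi_\la:L_1\to L_1$ with $\min\{\la,\|x-y\|_1\}\le\|\Psi_\la(x)-\Psi_\la(y)\|_1\le\frac{e}{e-1}\min\{\la,\|x-y\|_1\}$. Forming the $L_1$-direct integral $\Phi(x)=\bigl(\om'(\infty)x,\ \tfrac{c}{2}e_x,\ (\la\mapsto\Psi_\la(x))\bigr)$ into $L_1\oplus_1\ell_1\oplus_1 L_1(d\nu(\la);L_1)$, whose norm is additive across coordinates, I obtain
\[
\|\Phi(x)-\Phi(y)\|_1=c\cdot\1_{\{\|x-y\|_1>0\}}+\om'(\infty)\|x-y\|_1+\int_{(0,\infty)}\|\Psi_\la(x)-\Psi_\la(y)\|_1\,d\nu(\la).
\]
Since the first two coordinates are isometric and only the truncation coordinate is inflated, by a factor lying in $[1,\frac{e}{e-1}]$, this quantity lies between $\om(\|x-y\|_1)$ and $\frac{e}{e-1}\,\om(\|x-y\|_1)$.

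The main obstacle is a measurability point: for the last coordinate to define an element of $L_1(d\nu(\la);L_1)$ and for its norm to be computed by Fubini, the family $\{\Psi_\la\}$ must depend measurably on $\la$. I would resolve this via scale invariance, setting $\Psi_\la(x):=\la\,\Psi_1(x/\la)$ from a single scale-$1$ embedding $\Psi_1$; a direct check shows this preserves the two-sided estimate and makes $\la\mapsto\Psi_\la(x)$ manifestly measurable. Carried out this way, the argument in fact yields the sharper distortion $e/(e-1)$; the weaker constant $(2\sqrt2+3)e/(e-1)$ recorded in the statement is the price of running the analogous argument through the general Brudnyi--Krugljak $K$-divisibility machinery, which replaces the direct integral by a discrete decomposition whose overlap is controlled by the $K$-divisibility constant.
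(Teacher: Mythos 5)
Your proof is correct, but it takes a different route from the one the paper relies on. The paper does not prove Corollary~\ref{C:concave} itself: it cites \cite{MN15}, where the statement is deduced from Theorem~\ref{T:trunc} through the Brudnyi--Krugljak $K$-divisibility machinery, and the factor $2\sqrt{2}+3$ is precisely the known bound for the $K$-divisibility constant. You instead use the exact integral representation $\om(t)=c\cdot\1_{\{t>0\}}+\om'(\infty)t+\int_{(0,\infty)}\min\{\la,t\}\,d\nu(\la)$ of a concave non-decreasing function (which is correct, including the possible jump $c=\om(0^+)>0$, allowed by the hypotheses) and assemble the truncation embeddings by a direct integral; this is more elementary, avoids interpolation theory entirely, and yields the sharper bound $e/(e-1)$, which of course implies the stated one. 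Two small points deserve to be made explicit. First, for $\Phi(x)$ to be a genuine element of $L_1(d\nu(\la);L_1)$ you must normalize at a basepoint, say $\Psi_\la(0)=0$; then $\|\Psi_\la(x)\|\le\frac{e}{e-1}\min\{\la,\|x\|_1\}$ is $\nu$-integrable because $\int\min\{\la,\|x\|_1\}\,d\nu(\la)\le\om(\|x\|_1)<\infty$, so the Bochner integral defining the norm of $\Phi(x)-\Phi(y)$ converges and Fubini applies. Second, your scale-invariant choice $\Psi_\la(x)=\la\Psi_1(x/\la)$ does settle measurability, since $\Psi_1$ is Lipschitz (hence $\la\mapsto\la\Psi_1(x/\la)$ is continuous with separable range, so strongly measurable), and a direct computation confirms it preserves the two-sided estimate of Theorem~\ref{T:trunc}. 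With these details added the argument is complete and, if anything, stronger than what the corollary asserts.
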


We will also use the following immediate corrolary of Theorem~\ref{T:trunc}

\begin{corollary}\label{C:trunc}
Let $M$ be a set, $D\ge 1$, and $f:M\times M\to[0,\infty)$ be a function   such that there exists a map $\phi:M\to L_1$ such that  for all $x,y\in M$, we have
\[f(x,y)\le\|\phi(x)-\phi(y)\|\le Df(x,y).\]
Then, for any constant $\la>0$, $c_1(M, \min\{f(x,y),\la\})\le eD/(e-1)$.
\end{corollary}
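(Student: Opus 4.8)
The plan is to obtain $\Psi$ as a composition $\Psi=\Phi\circ\phi$, where $\phi$ is the given map realizing $c_1$ of the untruncated function $f$ and $\Phi$ is the embedding supplied by Theorem~\ref{T:trunc} applied to $L_1$ equipped with the truncated metric at level $\la$. The point is that truncation at a fixed level $\la$ commutes well, up to the factor $D$, with the $D$-distortion of $\phi$, so that Theorem~\ref{T:trunc} can be transported from the truncation of the genuine $L_1$-metric to the truncation $\min\{f,\la\}$ of the function $f$.

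Concretely, I would first invoke Theorem~\ref{T:trunc} with parameter $\la$: after rescaling the resulting embedding so that its lower Lipschitz constant equals $1$, this yields a map $\Phi:L_1\to L_1$ such that for all $u,v\in L_1$,
\[
\min\{\la,\|u-v\|_1\}\le\|\Phi(u)-\Phi(v)\|_1\le\frac{e}{e-1}\,\min\{\la,\|u-v\|_1\}.
\]
Setting $u=\phi(x)$ and $v=\phi(y)$ and writing $\Psi=\Phi\circ\phi$, the chain becomes a statement about $\min\{\la,\|\phi(x)-\phi(y)\|\}$. The hypothesis $f(x,y)\le\|\phi(x)-\phi(y)\|\le Df(x,y)$ together with the monotonicity of $t\mapsto\min\{\la,t\}$ then gives
\[
\min\{\la,f(x,y)\}\le\min\{\la,\|\phi(x)-\phi(y)\|\}\le D\,\min\{\la,f(x,y)\}.
\]
Feeding these two inequalities into the displayed bounds for $\Phi$ produces
\[
\min\{\la,f(x,y)\}\le\|\Psi(x)-\Psi(y)\|_1\le\frac{eD}{e-1}\,\min\{\la,f(x,y)\},
\]
which is exactly the assertion $c_1\big(M,\min\{f(x,y),\la\}\big)\le eD/(e-1)$ in the sense of the abuse of notation introduced before the corollary.

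The entire content of the argument is therefore the second display, and within it the only nonobvious link is the elementary inequality $\min\{\la,Dt\}\le D\,\min\{\la,t\}$, valid for all $t\ge0$, $\la\ge0$, and $D\ge1$ (check the two cases $t\le\la$ and $t>\la$); this is what turns the upper bound $\|\phi(x)-\phi(y)\|\le Df(x,y)$ into the clean factor $D\,\min\{\la,f(x,y)\}$ rather than the weaker $\min\{\la,Df(x,y)\}$. I do not expect any genuine obstacle here: once the composition is set up, everything reduces to this one-line monotonicity computation, which is precisely why the statement is labelled an immediate corollary of Theorem~\ref{T:trunc}.
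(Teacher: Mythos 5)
Your proposal is correct and is essentially identical to the paper's own proof: both compose the given map $\phi$ with the truncation embedding from Theorem~\ref{T:trunc} and use the elementary inequality $\min\{\la,Dt\}\le D\min\{\la,t\}$ for $D\ge1$ to absorb the distortion factor. No issues.
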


\begin{proof}
By Theorem~\ref{T:trunc}, there exists a map $T:L_1\to L_1$ such that for all  $u,v\in L_1$
\[ \min\{\|u-v\|, \la\}\le \|Tu-Tv\|\le \frac{e}{e-1}\min\{\|u-v\|, \la\}.\]

Thus for all $x,y\in M$
\begin{equation*}
\begin{split}
 \|T\phi(x)-T\phi(y)\|&\le \frac{e}{e-1}\min\{\|\phi(x)-\phi(y)\|, \la\}\le \frac{e}{e-1}\min\{Df(x,y), \la\}\\
 &\le \frac{eD}{e-1}\min\{f(x,y), \la\}
 \end{split}
\end{equation*}
and
\begin{equation*}
\begin{split}
 \|T\phi(x)-T\phi(y)\|\ge \min\{\|\phi(x)-\phi(y)\|, \la\}
 \ge \min\{f(x,y), \la\},
 \end{split}
\end{equation*}
which ends the proof.
\end{proof}

If $G$ and $H$ are real  valued quantities  or functions, we use notation $G\asymp H$ to mean that there exist $0<\alpha\le
\beta<\infty$ such that $\alpha G\le H\le \beta G$. The numbers
$\alpha$ and $\beta$ can depend on the parameters mentioned in the
statements of the results we are proving, but not on elements
$x,y$ of the considered metric space.

\section{Twisted unions of
hypercubes and of general metric spaces}\label{S:Definition}

We start by presenting the concrete examples of twisted unions of
hypercubes from \cite{JLS86},  \cite{Nao15}, and \cite{NR17}.

\begin{example}\cite{JLS86,Nao15,NR17}\label{E:ConNR&N}
Let $\mathbb{F}^n_2=\{0,1\}^n$ be the Hamming cube embedded into
$\ell_1^n$ in a natural way and for $x,y\in \mathbb{F}^n_2$ we use
the notation $\|x-y\|$  for the $\ell_1$-norm. Let  $r\in
(0,\infty)$ and $\al\in(1/2,1]$ be fixed constants.

Define the metric on $\mathbb{F}^n_2\times
\mathbb{F}_2=\{0,1\}^n\times \{0,1\}$ as the shortest path metric when  $\mathbb{F}^n_2\times\mathbb{F}_2$ is considered as a graph with the following edges and weights:
\begin{itemize}
\item for every $x,y\in \mathbb{F}^n_2$ there is an edge with ends $(x,0)$ and $(y,0)$ of weight $\|x-y\|^{\frac{1}{2\al}}$,
\item for every $x,y\in \mathbb{F}^n_2$ there is an edge with ends $(x,1)$ and $(y,1)$ of weight $\frac{\|x-y\|}{r^{2\al-1}}$,
\item for every $x\in \mathbb{F}^n_2$ there is an edge with ends $(x,0)$ and $(x,1)$ of weight $r$.
\end{itemize}
The exact formula for the distance between any two points of $\mathbb{F}^n_2\times\mathbb{F}_2$ in this metric is computed in
\cite[Lemma~15]{NR17}, cf. also \eqref{E:distconcave} and Remark~\ref{R:dx0y1-r-2} below.
\begin{equation}\label{E:distconcave-nr}
  d((x,a),(y,b))=\begin{cases}
   \|x-y\|^{\frac{1}{2\al}}, \  &\text{if\ \ } a=b=0,\\
   \min\{\frac{\|x-y\|}{r^{2\al-1}},2r+\|x-y\|^{\frac{1}{2\al}}\}, \  &\text{if\ \ } a=b=1,\\
    r\ \ &\text{if\ \ } x=y, a\ne b,\\
   r+\min\{\frac{\|x-y\|}{r^{2\al-1}},\|x-y\|^{\frac{1}{2\al}}\}\ \ &\text{if\ \ }  a\ne b.
   \end{cases}
\end{equation}

\end{example}

In \cite{NR17} it was   shown that Example~\ref{E:ConNR&N} is a special case of the following more general example.

\begin{example}\label{E:GenNR&N}\cite[Remark 18]{NR17}
Let $\mathbb{F}^n_2$ be the Hamming cube, $r\in (0,\infty)$ be a constant, and
$\omega_0$ and $\omega_1$ be concave
non-decreasing continuous functions on $[0,\infty)$ vanishing at $0$ and such that for all $t>0$, $\om_0(t)>0$ and $\om_1(t)>0$.
Define the metric on $\mathbb{F}^n_2\times
\mathbb{F}_2$ as the shortest path metric when
 $\mathbb{F}^n_2\times\mathbb{F}_2$ is considered as a graph with the following edges and weights:
\begin{itemize}
\item for every $x,y\in \mathbb{F}^n_2$ there is an edge with ends $(x,0)$ and $(y,0)$ of weight $\om_0(\|x-y\|)$,
\item for every $x,y\in \mathbb{F}^n_2$ there is an edge with ends $(x,1)$ and $(y,1)$ of weight $\om_1(\|x-y\|)$,
\item for every $x\in \mathbb{F}^n_2$ there is an edge with ends $(x,0)$ and $(x,1)$ of weight $r$.
\end{itemize}

Since the functions $\omega_0$ and $\omega_1$ are concave,
it is easy to see that   the above weights imply that the shortest path  metric on $\mathbb{F}^n_2\times
\mathbb{F}_2$ satifies
\begin{equation}\label{E:distconcave}
  d((x,a),(y,b))=\begin{cases}
   \min\{\om_0(\|x-y\|),2r+\om_1(\|x-y\|)\}, \  &\text{if\ \ } a=b=0,\\
   \min\{\om_1(\|x-y\|),2r+\om_0(\|x-y\|)\}, \  &\text{if\ \ } a=b=1,\\
    r\ \ &\text{if\ \ } x=y, a\ne b.
   \end{cases}
\end{equation}
Moreover, by Lemma~\ref{L:dx0y1-rg}(c) below, if $a\ne b$,
$$d((x,a),(y,b))=r+\min\{\om_0(\|x-y\|),\om_1(\|x-y\|)\}.$$

Notice that if we  define  
\begin{equation*}
\begin{split}
\varpi_0(t)&= \min\{\om_0(t),2r+\om_1(t)\}, \\
\varpi_1(t)&= \min\{\om_1(t),2r+\om_0(t)\},
\end{split}
\end{equation*}
 then the functions $\varpi_0, \varpi_1$ are concave,
non-decreasing,  continuous,  vanishing only at $t=0$, and for all $t$
$$|\varpi_0(t)-\varpi_1(t)|\le 2r.$$
It is clear that we could have used functions $\varpi_0(\|x-y\|),\varpi_1(\|x-y\|)$ as weights in place of $\om_0(\|x-y\|),\om_1(\|x-y\|)$, respectively.

Naor and Rabani \cite[Theorem 16]{NR17} proved that the $\al$-snowflake (for the same $\al\in(1/2,1]$ that is used as a parameter in the definition of the space) of the metric space considered in Example~\ref{E:ConNR&N} on the set $\mathbb{F}^n_2\times\{0\}$  embeds isometrically into $\ell_2$, but that for certain choices of the value of $r>0$, the minimum distortion of any embedding of the space $(\mathbb{F}^n_2\times\mathbb{F}_2,d^\al)$ into $\ell_2$ is at least of the order ${\disp n^{\frac{2\al-1}{4\al}}}$, and thus the $\al$-extension constant grows to $\infty$ when the size of the hypercube grows to $\infty$: ${\disp e_m^\al(\ell_\infty,\ell_2) \gtrsim (\log m)^{\frac{2\al-1}{4\al}}}$.

Naor and Rabani \cite[Remark 18]{NR17} also noted that, by a
result from \cite{MN15} (see Corollary~\ref{C:concave}),
 for all $n$, the metric spaces
$(\mathbb{F}^n_2\times\{0\},
\varpi_0(\|x-y\|))$ and $(\mathbb{F}^n_2\times\{1\}, \varpi_1(\|x-y\|))$,
embed into $L_1$ with bounded distortions,
 and they asked whether any embedding of
 $(\mathbb{F}^n_2\times\mathbb{F}_2,d)$
 must incur a bi-Lipschitz distortion that tends to $\infty$ as $n\to \infty$, and commented that this would be the first known example of a metric space that can be partitioned into two subsets, each of which
well-embeds into $\ell_1$ yet the entire space does not, see also \cite[Open Problem
3.3]{Nao15}.
\end{example}

Following the ideas of Examples~\ref{E:ConNR&N} and
\ref{E:GenNR&N}, and of \cite[Remark 19]{NR17} we state the
 definitions of a twisted union  and a generalized twisted union of metric spaces.

\begin{definition}[Twisted Union  and Generalized Twisted Union of Metric Spaces]
\label{D:TwUn}
Let $M$ be a finite set which is endowed with two
metrics, $d_0$ and $d_1$, and let $r$ be a fixed positive number (respectively, let $r$ be a function from $M$ to $(0,\infty)$)   such that for all $x,y \in M$:
\begin{equation}\label{E:Cndr2-c}
|d_0(x,y)-d_1(x,y)|\le 2r,
\end{equation}
or, respectively,
\begin{align}\label{E:Cndr2}
&|d_0(x,y)-d_1(x,y)|\le r(x)+r(y),\\
\label{E:Cndr1}
{\text {\rm and}\ \ \ }&|r(x)-r(y)|\le d_0(x,y)+d_1(x,y).
\end{align}

Define the metric $d$ on
$M\times\mathbb{F}_2=M\times\{0,1\}$ as the shortest path metric when
 $M\times\mathbb{F}_2$ is considered as a graph with the following edges and weights:
\begin{itemize}
\item for every $x,y\in M$ there is an edge with ends $(x,0)$ and $(y,0)$ of weight $d_0(x,y)$,
\item for every $x,y\in M$ there is an edge with ends $(x,1)$ and $(y,1)$ of weight $d_1(x,y)$,
\item for every $x\in M$ there is an edge with ends $(x,0)$ and $(x,1)$ of weight $r$ (resp., $r(x)$).
\end{itemize}

If $r>0$ is a constant, the space $(M\times \mathbb{F}_2,d)$ is called the {\it twisted union of metric spaces $(M,d_0)$ and $(M,d_1)$ with the joining parameter $r$},  or the {\it $r$-twisted union}, for short.

 If $r(\cdot)$ is a  function, $(M\times \mathbb{F}_2,d)$ is called the {\it generalized twisted union  of $(M,d_0)$ and $(M,d_1)$ with the joining function $r:M\to(0,\infty)$}.
\end{definition}

We note that, unlike the situation in Example~\ref{E:GenNR&N}, for a space $M$ with arbitrary metrics
 $d_0, d_1$ that are not necessarily given as concave functions of another underlying metric, it is not clear how to derive an analogue of the formula \eqref{E:distconcave}, that is, a formula for the metric of the twisted union restricted to the sets $M\times \{0\}$ and $M\times \{1\}$. For this reason we impose conditions on relations between the metrics  $d_0, d_1$, and the joining parameter $r$  or the joining function $r:M\to (0,\infty)$, respectively.
The condition \eqref{E:Cndr2-c} (resp., conditions \eqref{E:Cndr2} and \eqref{E:Cndr1})
are necessary and sufficient for the weights of all edges   to be equal to the weighted graph distance between
their ends.
Hence the metric $d$ on the twisted union (resp., the generalized twisted union) $M\times \mathbb{F}_2$ satisfies
\begin{equation}\label{E:defd}
  d((x,a),(y,b))=\begin{cases}
   d_0(x,y) \ \ &\text{if\ \ } a=b=0,\\
   d_1(x,y) \ \ &\text{if\ \ } a=b=1,\\
   r \ \ {(\text{\rm  resp., }}    r(x) ) \ &\text{if\ \ } x=y, a\ne b.
   \end{cases}
\end{equation}

Computation of the formula for $d((x,a),(y,b))$ when $x\ne y$ and $a\ne b$ is more delicate and we did it only when certain additional conditions on $d_0, d_1$ and $r$ (resp. $r(\cdot)$) are satisfied.


We prove, under two different natural assumptions that are independent of each other, that for all
$x,y\in M$,
\begin{equation}\label{E:dx0y1gen}
\begin{split}
  d((x,0),(y,1))
  &\asymp \min\{d_0(x,y), d_1(x,y)\}+r(x).
  \end{split}
\end{equation}

By Lemma~\ref{L:dx0y1-rg} below, the formula \eqref{E:dx0y1gen} is valid if
 the joining function $r(\cdot)$ is Lipschitz with respect to both metrics $d_0$ and $d_1$, not just with respect to the sum of these metrics as required in \eqref{E:Cndr1}, that is, if there exists a constant $L>0$, such that for all $x,y\in M$,
\begin{equation}\label{E:rLip}
 |r(x)-r(y)|\le L\min\{d_0(x,y), d_1(x,y)\}.
\end{equation}

In particular,  \eqref{E:dx0y1gen}  is satisfied when the joining function $r(\cdot)$ is constant, since in this case \eqref{E:rLip} holds with $L=0$.


 By Lemma~\ref{SL:dBtwCbs} below, the formula \eqref{E:dx0y1gen} is valid if  there exists a constant $C>0$ such that for all $x,y\in M$,
\begin{equation}\label{E:d1lecd0}
 d_1(x,y)\le Cd_0(x,y).
\end{equation}

Condition \eqref{E:d1lecd0} is satisfied, for example, in Example~\ref{E:ConNR&N}, since there, independent of the value of $r>0$ or $\al\in(1/2,1]$, for all $x,y\in M$,  we have
\begin{equation}\label{E:Ex3.1C}
 \varpi_1(\|x-y\|)=\min\Big\{\frac{\|x-y\|}{r^{2\al-1}},2r+\|x-y\|^{\frac{1}{2\al}}\Big\}\le 2 \|x-y\|^{\frac{1}{2\al}}=2\varpi_0(\|x-y\|).
 \end{equation}

It is clear that conditions \eqref{E:rLip} and \eqref{E:d1lecd0} are independent of each other.

We note that   \eqref{E:rLip} implies that
\begin{equation*}\label{E:danebrLip}
\begin{split}
  d((x,0),(y,1)) &\asymp \min\{d_0(x,y), d_1(x,y)\}+r(x)\\
  &\asymp \min\{d_0(x,y), d_1(x,y)\}+\max\{r(x), r(y)\}.
  \end{split}
\end{equation*}

On the other hand, \eqref{E:d1lecd0} implies that $\min\{d_0(x,y), d_1(x,y)\}\asymp d_1(x,y)$.
Thus, if \eqref{E:d1lecd0} holds, then


\begin{equation}\label{E:danebdCd0}
  d((x,0),(y,1)) \asymp \min\{d_0(x,y), d_1(x,y)\}+r(x)\asymp d_1(x,y)+r(x).
\end{equation}
 However in   \eqref{E:danebdCd0} it is not possible to replace $r(x)$ by either $r(y)$,
$\max\{r(x), r(y)\}$, or $\min\{r(x), r(y)\}$, since \eqref{E:d1lecd0} does not imply that the function $r(\cdot)$ is Lipschitz with respect to the metric  $d_1$.

We now prove the lemmas mentioned above. The common assumption of
Lemmas~\ref{L:dx0y1-rg} and  \ref{SL:dBtwCbs} is:
\begin{equation}
  \tag{$*$}\label{eq:A}
  \parbox{\dimexpr\linewidth-4em}{%
    \strut
    Let $M$ be a metric space endowed with two metrics $d_0$ and
$d_1$,   and let $r:M\to (0,\infty)$ be a function such that \eqref{E:Cndr2} and \eqref{E:Cndr1}
are satisfied, and let $(M\times \mathbb{F}_2,d)$ be the  generalized twisted union
 of $(M,d_0)$  and $(M,d_1)$ with the joining function $r(\cdot)$.
    \strut
  }%
\end{equation}

\begin{lemma}\label{L:dx0y1-rg}
 Suppose $(*)$ and that the joining function $r(\cdot)$ is Lipschitz with respect to both metrics $d_0$ and
$d_1$,  that is, suppose that there exists a constant $L>0$, such that for all $x,y\in M$
\begin{equation}\label{E:rhLipschitz}\tag{3.8}
 |r(x)-r(y)|\le L\min\{d_0(x,y), d_1(x,y)\}.
\end{equation}

Denote $\min\{d_0(x,y), d_1(x,y)\}$ by $h(x,y)$.

 \begin{enumerate}[{\rm (a)}]
\item Then for all $x,y\in M$
\begin{equation}\label{E:dx0y1-rg}
 \frac1{A}\Big(h(x,y)+\max\{r(x), r(y)\}\Big)\le d((x,0),(y,1))\le
  h(x,y)+\max\{r(x), r(y)\},
\end{equation}
where $A=\max\{2L+1,3\}$.

\item If the joining function is constant with $r(x)=r>0$, for all
$x\in M$, then for all $x,y\in M$
\begin{equation}\label{E:dx0y1-r}
 \frac1{3}\Big(r+ h(x,y)\Big)\le d((x,0),(y,1))\le
  r+ h(x,y).
\end{equation}
\item If $h(x,y)=\min\{d_0(x,y), d_1(x,y)\}$ is a metric on $M$
and the joining function is constant with $r(x)=r>0$, for all
$x\in M$, then for all $x,y\in M$
\begin{equation}\lb{E:dx0y1-r-2}
d((x,0),(y,1))=
  r+ h(x,y).
  \end{equation}
\end{enumerate}
\end{lemma}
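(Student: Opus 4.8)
The plan is to prove the three bounds by separating upper from lower estimates, reducing everything in part~(a) to an elementary inequality inside $(M,d_0,d_1,r)$; parts~(b) and~(c) then follow, the former by specializing $L=0$ and the latter by an extra argument exploiting that $h$ is a genuine metric. Throughout I use that \eqref{E:Cndr2}--\eqref{E:Cndr1} force each edge weight to equal the graph distance between its ends, so that $d((u,0),(v,0))=d_0(u,v)$, $d((u,1),(v,1))=d_1(u,v)$, and $d((u,0),(u,1))=r(u)$.

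\textbf{Upper bounds.} For the upper bound in (a) I would exhibit two ``single-crossing'' paths from $(x,0)$ to $(y,1)$: the path $(x,0)\to(x,1)\to(y,1)$ of length $r(x)+d_1(x,y)$, and the path $(x,0)\to(y,0)\to(y,1)$ of length $d_0(x,y)+r(y)$. Thus $d((x,0),(y,1))\le\min\{r(x)+d_1(x,y),\,d_0(x,y)+r(y)\}$, and a short case analysis (split on whether $h(x,y)=d_0(x,y)$ or $=d_1(x,y)$, and on which of $r(x),r(y)$ is larger) shows this minimum is always $\le h(x,y)+\max\{r(x),r(y)\}$. This needs no Lipschitz hypothesis and yields the upper inequalities in all three parts.

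\textbf{Lower bound in (a) --- the main step.} Given an arbitrary path $P$ from $(x,0)$ to $(y,1)$, let $z\in M$ be the point at which $P$ makes its \emph{first} crossing (a vertical edge from level $0$ to level $1$); such a $z$ exists since $P$ starts in level $0$ and ends in level $1$. The portion of $P$ before this crossing stays in level $0$, so has length $\ge d_0(x,z)$; the crossing costs $r(z)$; and the portion after is a walk from $(z,1)$ to $(y,1)$, of length $\ge d((z,1),(y,1))=d_1(z,y)$. Hence the length of $P$ is at least $d_0(x,z)+r(z)+d_1(z,y)$, and it remains to prove, writing $s=d_0(x,z)$ and $t=d_1(z,y)$,
$$h(x,y)+\max\{r(x),r(y)\}\ \le\ A\big(s+r(z)+t\big)\qquad\text{for every }z.$$
I would bound $h(x,y)$ two ways using the triangle inequality together with \eqref{E:Cndr2}, namely $h\le d_0(x,y)\le s+d_0(z,y)\le s+t+r(z)+r(y)$ and symmetrically $h\le d_1(x,y)\le d_1(x,z)+t\le s+t+r(z)+r(x)$, so that $h(x,y)\le s+t+r(z)+\max\{r(x),r(y)\}$; and I would bound the vertical term by the Lipschitz hypothesis \eqref{E:rhLipschitz}: since $h(x,z)\le d_0(x,z)=s$ and $h(z,y)\le d_1(z,y)=t$, we get $r(x)\le r(z)+Ls$ and $r(y)\le r(z)+Lt$, whence $\max\{r(x),r(y)\}\le r(z)+L(s+t)$. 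Combining, $h(x,y)+\max\{r(x),r(y)\}\le (s+t+r(z))+2\max\{r(x),r(y)\}\le (1+2L)(s+t)+3r(z)\le A(s+t+r(z))$ with $A=\max\{2L+1,3\}$. As $P$ was arbitrary this bounds $d((x,0),(y,1))$ from below. I expect the only delicate point to be this accounting --- extracting exactly the constant $A=\max\{2L+1,3\}$ rather than a cruder bound; the reduction to the first crossing is what makes it clean, since it avoids reasoning about paths with many level changes.

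\textbf{Parts (b) and (c).} Part~(b) is the specialization of (a) to a constant joining function: then \eqref{E:rhLipschitz} holds with $L=0$, so $A=\max\{1,3\}=3$ and $\max\{r(x),r(y)\}=r$. For the equality in~(c) the upper bound is already in hand, so only a matching lower bound is needed, and here I would argue directly rather than through a single crossing. Projecting any path $P$ to $M$ gives a walk $x=p_0,\dots,p_N=y$ in which each horizontal edge in level $\e$ has weight $d_\e(p_{i-1},p_i)\ge h(p_{i-1},p_i)$, while each vertical edge projects to a zero-length $h$-step. Since $h$ is assumed to be a metric, the triangle inequality gives $\sum_{\text{horizontal}} h(p_{i-1},p_i)\ge h(x,y)$, and since $P$ must change level at least once it contributes at least one term equal to $r$. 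Hence the length of $P$ is at least $h(x,y)+r$, so $d((x,0),(y,1))\ge r+h(x,y)$, which together with the upper bound gives the claimed equality.
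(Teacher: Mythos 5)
Your proof is correct and follows essentially the same route as the paper's: both reduce $d((x,0),(y,1))$ to $\inf_{z}\big(d_0(x,z)+r(z)+d_1(z,y)\big)$, obtain the upper bound from the two single-crossing paths through $x$ and through $y$, and get the lower bound by combining the triangle inequality with \eqref{E:Cndr2} (to bound $h(x,y)$) and the Lipschitz condition \eqref{E:rhLipschitz} (to bound $\max\{r(x),r(y)\}$ by $r(z)+L(s+t)$), arriving at the same constant $A=\max\{2L+1,3\}$. The only cosmetic differences are your explicit first-crossing justification of the reduction and your path-projection argument in part (c), where the paper instead applies the triangle inequality for $h$ directly to $h(x,z)+h(z,y)$.
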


\begin{remark}\lb{R:dx0y1-r-2}
We included \eqref{E:dx0y1-r-2} above, because we are particularly interested in spaces  described in Example~\ref{E:GenNR&N},  when
$M$ is a subset of $L_1$ and $d_i(x,y)=\varpi_i(\|x-y\|)$ for $i=0,1$, where functions $\varpi_0, \varpi_1$ are concave,
non-decreasing,  continuous, and vanishing only at $t=0$, see Corollary~\ref{fixedr-omega-1}. In this situation $\min\{d_0(x,y), d_1(x,y)\}$ is a metric on $M$.

We note that if $\min\{d_0(x,y), d_1(x,y)\}$ is a metric on $M$, then one can obtain slightly better constants also in \eqref{E:dx0y1-rg} above.
\end{remark}

\begin{proof}
We fix $x,y\in M$.
The condition \eqref{E:Cndr2} implies that on a shortest path from
$(x,0)$ to $(y,1)$ we may avoid moving from $M\times \{0\}$ to
$M\times \{1\}$ more than once. Thus
\[d((x,0),(y,1))=\inf_{z\in M}\left(d_0(x,z)+d_1(z,y)+r(z)\right).\]
If $h(x,y)=d_1(x,y)$, we pick $z=x$, otherwise we pick $z=y$, so that  we get
\begin{equation*}\lb{E:dx0y1-r-2ag}
d((x,0),(y,1))\le h(x,y)+ \max\{r(x), r(y)\}.
\end{equation*}

On the other hand, by \eqref{E:Cndr2},   for all $u,v\in M$ we have
$d_0(u,v)\le d_1(u,v)+r(u)+r(v)$. Thus, for every $x,y,z\in M$ we have
\begin{equation}\lb{d0d1estg}
\begin{split}
h(x,y)&\le d_0(x,y)\le d_0(x,z)+d_0(z,y)\le
d_0(x,z)+d_1(z,y)+r(z)+ r(y)\\&\le d_0(x,z)+d_1(z,y)+r(z)+\max\{r(x), r(y)\}.
\end{split}
\end{equation}

Hence, for every $x,y,z\in M$ and $T>1$ we have
\[\begin{split}
d_0&(x,z)+d_1(z,y)+r(z)
-\frac1T\Big(h(x,y)+ \max\{r(x), r(y)\}\Big)\\
&\stackrel{\eqref{d0d1estg}}{\ge} d_0(x,z)+d_1(z,y)+
r(z)-
\frac1T\Big(d_0(x,z)+d_1(z,y)+r(z)+2\max\{r(x), r(y)\}\Big)\\
&\ge
\Big(1-\frac1T\Big)\Big(d_0(x,z)+d_1(z,y)\Big)+\Big(1-\frac3T\Big)r(z) -\frac2T\Big|r(z)-\max\{r(x), r(y)\}\Big|\\
&\stackrel{\eqref{E:rhLipschitz}}{\ge}
\Big(1-\frac1T\Big)\Big(h(x,z)+h(z,y)\Big)+\Big(1-\frac3T\Big)r(z) -\frac{2L}T\max\{h(x,z), h(z,y)\}\\
&\ge
\Big(1-\frac{2L+1}T\Big)\max\{h(x,z), h(z,y)\}+\Big(1-\frac3T\Big)r(z)
\end{split}\]
If $T=\max\{2L+1,3\}$, the ultimate quantity is nonnegative, which proves
\eqref{E:dx0y1-rg}.

  Formula \eqref{E:dx0y1-r} immediately follows from \eqref{E:dx0y1-rg}, since when the function $r(\cdot)$ is constant then it is Lipschitz with the Lipschitz constant $L=0$.

If $h(x,y)=\min\{d_0(x,y), d_1(x,y)\}$ is a metric on $M$, then
for all $z\in M$ we have
  \[\begin{split}
d_0(x,z)+d_1(z,y)+r
&\ge\min\{d_0(x,z), d_1(x,z)\} +\min\{d_0(z,y), d_1(z,y)\}+r\\
&\ge
 \min\{d_0(x,y), d_1(x,y)\}+r,
\end{split}\]
which proves \eqref{E:dx0y1-r-2}.
\end{proof}

\begin{lemma}\label{SL:dBtwCbs}  Suppose $(*)$. Then conditions \eqref{E:Cndr2}, \eqref{E:Cndr1}, and
\begin{equation}\label{E:d1lecd0-22}\tag{3.9}
\forall x,y\in M\quad d_1(x,y)\le Cd_0(x,y).
\end{equation}
imply that
\begin{equation}\label{E:dx0y1}
 \frac1{2C+1}\Big(d_1(x,y)+r(x)\Big)\le d((x,0),(y,1))\le d_1(x,y)+r(x).
\end{equation}

\noindent
Moreover, if $r(x)=r>0$ for all $x\in M$, then \eqref{E:Cndr2},
\eqref{E:Cndr1}, and \eqref{E:d1lecd0-22} imply that
\begin{equation}\label{E:dx0y1-2}
 \frac1{C+1}\Big(d_1(x,y)+r\Big)\le d((x,0),(y,1))\le d_1(x,y)+r.
\end{equation}
\end{lemma}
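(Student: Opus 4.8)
The plan is to reduce everything to the shortest-path formula established at the start of the proof of Lemma~\ref{L:dx0y1-rg}. That derivation used only condition \eqref{E:Cndr2}, which forces any shortest path from $(x,0)$ to $(y,1)$ to cross between the two copies of $M$ exactly once; hence
\[
 d((x,0),(y,1)) = \inf_{z\in M}\bigl(d_0(x,z) + r(z) + d_1(z,y)\bigr).
\]
The upper bound in \eqref{E:dx0y1} is then immediate: taking $z = x$ gives the single summand $d_0(x,x) + r(x) + d_1(x,y) = r(x) + d_1(x,y)$.

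For the lower bound I would fix an arbitrary $z\in M$ and show that the corresponding summand $S(z) := d_0(x,z) + r(z) + d_1(z,y)$ satisfies $(2C+1)S(z) \ge d_1(x,y) + r(x)$, after which taking the infimum over $z$ finishes the argument. The three available tools are exactly the three hypotheses: the triangle inequality for $d_1$ produces $d_1(x,y) \le d_1(x,z) + d_1(z,y)$; condition \eqref{E:Cndr1} lets me trade $r(x)$ for $r(z)$ via $r(x) \le r(z) + d_0(x,z) + d_1(x,z)$; and condition \eqref{E:d1lecd0-22} converts the stray $d_1$-terms into the $d_0$-terms that actually appear in $S(z)$, through $d_1(x,z) \le C d_0(x,z)$. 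Adding the first two estimates gives $d_1(x,y) + r(x) \le 2d_1(x,z) + d_1(z,y) + r(z) + d_0(x,z)$, and then replacing each copy of $d_1(x,z)$ by $C d_0(x,z)$ collapses the right-hand side into $(2C+1)d_0(x,z) + d_1(z,y) + r(z)$, which is $\le (2C+1)S(z)$ since $2C+1 \ge 1$.

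For the moreover part, the point is that when $r(\cdot) \equiv r$ is constant, the step that trades $r(x)$ for $r(z)$ is no longer needed, since $r(z) = r(x) = r$ outright. This removes the one occurrence of $d_1(x,z)$ coming from \eqref{E:Cndr1}, so only a single factor of $C$ is spent on $d_1 \le Cd_0$; the same bookkeeping then yields $d_1(x,y) + r \le (C+1)S(z)$ and hence \eqref{E:dx0y1-2}. The upper bound is unchanged.

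The computation itself is routine; the only real subtlety — and the step I would check most carefully — is the bookkeeping in the lower bound, namely making sure that after applying \eqref{E:Cndr1} every surviving term ($d_1(x,z)$ twice, $d_1(z,y)$, $d_0(x,z)$, and $r(z)$) can be absorbed into $S(z)$ with total weight at most $2C+1$. The doubling to $2C+1$, versus $C+1$ in the constant case, is precisely the cost of the extra $d_1(x,z)$ produced by \eqref{E:Cndr1}, each such term costing a factor $C$ under \eqref{E:d1lecd0-22}.
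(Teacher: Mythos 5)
Your proposal is correct and follows essentially the same route as the paper: the reduction to $d((x,0),(y,1))=\inf_{z}\bigl(d_0(x,z)+d_1(z,y)+r(z)\bigr)$ via \eqref{E:Cndr2}, the choice $z=x$ for the upper bound, and the lower bound obtained by combining the triangle inequality for $d_1$, condition \eqref{E:Cndr1} to trade $r(x)$ for $r(z)$, and \eqref{E:d1lecd0-22} to absorb the resulting $d_1(x,z)$ terms into $d_0(x,z)$ — the paper merely phrases this as checking nonnegativity of $S(z)-\tfrac1T(d_1(x,y)+r(x))$ with $T=2C+1$ (resp.\ $T=C+1$ when $r$ is constant), which is your computation divided through by $T$.
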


\begin{proof} The condition \eqref{E:Cndr2} implies that on
a shortest path from $(x,0)$ to $(y,1)$ we may avoid moving from
$M\times \{0\}$ to $M\times \{1\}$ more than once. Thus
\[d((x,0),(y,1))=\inf_{z\in M}\left(d_0(x,z)+d_1(z,y)+r(z)\right).\]
If we pick $z=x$, we get

\[d((x,0),(y,1))\le d_1(x,y)+r(x).\]

On the other hand, for every $z\in M$ and $T>1$ we have
\[\begin{split}
d_0(x,z)&+d_1(z,y)+r(z)
-\frac1T\Big(d_1(x,y)+r(x)\Big)\\
&\ge d_0(x,z)+d_1(z,y)-\frac1Td_1(x,y)-
\frac1T\Big(r(x)-r(z)\Big)\\
&\stackrel{\eqref{E:Cndr1}}{\ge}
d_0(x,z)+d_1(z,y)-\frac1Td_1(x,y)-
\frac1T\Big(d_0(x,z)+d_1(x,z)\Big)\\
&\ge\Big(1-\frac1T\Big)d_0(x,z)+d_1(z,y)-\frac1Td_1(x,y)-
\frac1Td_1(x,z)\\
&\ge\Big(1-\frac1T\Big)d_0(x,z)-\frac1Td_1(x,z)-
\frac1Td_1(x,z)\\
&\stackrel{\eqref{E:d1lecd0-22}}{\ge}\frac1{C}\Big(1-\frac1T\Big)d_1(x,z)-\frac2Td_1(x,z)\\
&\ge\frac{T-1-2C}{CT}d_1(x,z).
\end{split}\]
The ultimate quantity is nonnegative if $T=2C+1$. This proves
\eqref{E:dx0y1}. The proof of \eqref{E:dx0y1-2} is essentially the
same if we notice that $r(x)-r(z)=0$.
\end{proof}

\section{On $L_1$-embeddability of twisted unions}\lb{S:twistedemb}

\remove{This section aims to prove upper bounds for $L_1$-distortions of
twisted unions of $L_1$-embeddable metric spaces. We demonstrate
such results in a variety of regimes. The assumptions in some of
them are somewhat complicated. We organized our presentation in
this way to achieve the following goals: readers can see (1) how
the argument looks like in the simplest nontrivial case; (2) a
variety of assumptions under which the $L_1$-embeddability still
holds.}

In this section we study $L_1$-embeddability of twisted unions with uniform weight on all edges joining points from distinct original metric spaces.  We start from a general result and, as an application, we obtain that all metric spaces from Examples~\ref{E:ConNR&N} and \ref{E:GenNR&N}  are $L_1$-embeddable, see Corollary~\ref{fixedr-omega-1}. Another corollary of Theorem~\ref{fixedr-omega} is presented in Section~\ref{S:gentwistemb}, see Corollary~\ref{fixedr}.

\begin{theorem}\label{fixedr-omega}
Let  $r>0$ and $M$ be a metric space endowed with two metrics $d_0$ and
$d_1$,  such that \eqref{E:Cndr2-c}
 is satisfied, and for $i=0,1$, the space $(M,d_i)$ embeds into
$L_1$ with distortion at most $D_i$. Suppose also that there exist
a constant $K\ge 1$ and a map $\psi:M\to L_1$ such that for all
$x,y\in M$
\begin{equation}\label{minemb}
\min\{d_0(x,y),d_1(x,y)\}\le \|\psi(x)-\psi(y)\|\le
K\min\{d_0(x,y),d_1(x,y)\}.
\end{equation}

 Then the $r$-twisted union
$(M\times \mathbb{F}_2,d)$ of $(M,d_0)$  and $(M,d_1)$ 
 embeds into $L_1$ with distortion
bounded by a  constant $D$ that depends only on $D_0, D_1$ and $K$.
\end{theorem}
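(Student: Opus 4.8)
The plan is to build the embedding $F$ of $(M\times\mathbb{F}_2,d)$ into $L_1$ as an $\ell_1$-direct sum of four blocks, exploiting the decisive structural consequence of \eqref{E:Cndr2-c}: writing $h(x,y)=\min\{d_0(x,y),d_1(x,y)\}$, the inequality $|d_0-d_1|\le 2r$ forces $h(x,y)\le d_i(x,y)\le h(x,y)+2r$ for $i=0,1$. Thus within each layer the metric $d_i$ exceeds the common part $h$ by at most $2r$, while (by Lemma~\ref{L:dx0y1-rg}(b), applicable since $r$ is constant, so \eqref{E:Cndr2} and \eqref{E:Cndr1} hold trivially) the cross distances satisfy $d((x,0),(y,1))\asymp r+h(x,y)$. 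The design principle is to represent the possibly large common part $h$ \emph{once}, by $\psi$, so that it is seen identically from both layers, and to repair the at-most-$2r$ layer-specific discrepancy by \emph{bounded}, truncated corrections that cannot pollute the cross distances.

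Concretely, after normalizing the given embeddings so that $d_i(x,y)\le\|\phi_i(x)-\phi_i(y)\|\le D_id_i(x,y)$, I would apply Corollary~\ref{C:trunc} with $\lambda=2r$ to obtain maps $\Phi_i:M\to L_1$ with $\min\{d_i(x,y),2r\}\le\|\Phi_i(x)-\Phi_i(y)\|\le\frac{eD_i}{e-1}\min\{d_i(x,y),2r\}$, and translate each so that, fixing a basepoint, $\|\Phi_i(x)\|\le\frac{2eD_i}{e-1}r$ for all $x$ (the image of a truncation embedding has bounded diameter). Then define $F(x,a)\in\mathbb{R}\oplus_1 L_1\oplus_1 L_1\oplus_1 L_1$ by the separation coordinate $ar$, the common coordinate $\psi(x)$, the coordinate $\Phi_0(x)$ if $a=0$ and $0$ if $a=1$, and the coordinate $\Phi_1(x)$ if $a=1$ and $0$ if $a=0$. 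Since an $\ell_1$-sum of finitely many $L_1$-spaces is again an $L_1$-space, $F$ maps into $L_1$.

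The verification then splits by the type of pair. For a within-layer pair on layer $0$ one has $\|F(x,0)-F(y,0)\|=\|\psi(x)-\psi(y)\|+\|\Phi_0(x)-\Phi_0(y)\|$, which lies between $\max\{h,\min\{d_0,2r\}\}$ and $Kh+\frac{eD_0}{e-1}\min\{d_0,2r\}$; using $h\le d_0\le h+2r$ one checks this is $\asymp d_0$, with constants depending only on $K,D_0$ (and symmetrically on layer $1$). For a cross pair, the separation and common coordinates contribute $r+\|\psi(x)-\psi(y)\|\in[\,r+h,\;r+Kh\,]$, while the two correction coordinates contribute only the bounded basepoint norms $\|\Phi_0(x)\|+\|\Phi_1(y)\|\le\frac{2e(D_0+D_1)}{e-1}r$; hence $\|F(x,0)-F(y,1)\|\asymp r+h$, matching the true cross distance $\asymp r+h$ up to constants depending only on $K,D_0,D_1$. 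Combining the blocks yields the claimed bound on the distortion.

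The step I expect to be the main obstacle—and the one that dictates the whole construction—is controlling the cross distances: a separate full embedding of each layer would inject uncontrolled distance-to-basepoint terms into $\|F(x,0)-F(y,1)\|$. The hypothesis \eqref{minemb} is exactly what lets $\psi$ carry the common part $h$ consistently across both layers, while the bound $|d_0-d_1|\le 2r$ guarantees that the residual layer-specific correction has size $O(r)$ and can therefore be supplied by a truncated embedding whose bounded image contributes only an $O(r)$ term across layers, negligible against the cross distance $\gtrsim r$.
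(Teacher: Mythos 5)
Your proposal is correct and follows essentially the same route as the paper's proof: the same decomposition $d_i\asymp h+\min\{d_i,2r\}$, the same use of Corollary~\ref{C:trunc} to truncate the layer embeddings at $2r$, the same use of Lemma~\ref{L:dx0y1-rg}(b) for the cross distances, and the same four-block $\ell_1$-sum embedding (your translation of $\Phi_i$ so the basepoint maps to $0$ is just the paper's insertion of the constant $\vf_i(m_0)$ in the opposite layer). The verification of all three cases matches the paper's computation.
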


\begin{remark}\lb{R:semimetrics}
In general the minimum of two metrics does not need to be a metric.
However if \eqref{minemb} is satisfied, then there exists a metric $\gamma$ on $M$, and a constant $\beta\in(0,1]$ such that $2^{1/\be}=2K$, and for  all $x,y\in M$,
\[\frac14 (\min\{d_0(x,y),d_1(x,y)\})^\beta\le \gamma(x,y)\le
(\min\{d_0(x,y),d_1(x,y)\})^\beta.\]

This follows from a routine adjustment of  a result of Kalton,  Peck,  and  Roberts \cite[Theorem~1.2]{KPR84}, who studied properties of generalizations of F-norms that instead of the usual triangle inequality satisfy the ultimate inequality in  
\begin{equation}\label{V}
h(x,y)\le K\big(h(x,z)+h(z,y)\big)\le 2K\max\big\{h(x,z),h(z,y)\big\}.
\end{equation}

Clearly, if $h(x,y)\DEF\min\{d_0(x,y),d_1(x,y)\}$, then \eqref{minemb} implies \eqref{V} and  $h$ is separating ($h(x,y)=0$ iff $x=y$) and symmetric ($h(x,y)=h(y,x)$ for all $x,y\in M$).

We note that such functions were studied already by Fr\'echet \cite{Fre1906,Fre1913}, who called any symmetric, separating function $h:M\times M\to[0,\infty)$   satisfying  a  condition sligthy weaker than \eqref{V} a {\it voisinage}.  Chittenden \cite{Chi1917} proved that any space with a voisinage is homeomorphic to a metric space. For modern theory of similar types of spaces see the monograph of Kalton,  Peck,  and  Roberts \cite{KPR84}.
\end{remark}

\begin{proof}
Define for all $x,y\in M$ and $i=0,1$
\begin{equation}\label{E:d2}
h(x,y)=\min\{d_0(x,y),d_1(x,y)\}
\end{equation}
\begin{equation}\label{E:defrho}
\rho_i(x,y)=\min\{d_i(x,y),2r\}.
\end{equation}

Then, for $i=0,1$, $x,y\in M$ we have
\begin{equation}\label{E:rho}
d_i(x,y)\le \rho_i(x,y)+h(x,y)\le 2d_i(x,y).
\end{equation}

Indeed, by \eqref{E:d2} and \eqref{E:defrho}, the rightmost
inequality is clear. If for some $x,y\in M$, either
$\rho_i(x,y)=d_i(x,y)$ or $h(x,y)=d_i(x,y)$ then the leftmost
inequality also holds. The remaining case is when $\rho_i(x,y)=2r$
and $h(x,y)=d_{i+1}(x,y)$, where $i+1=1$ if $i=0$, and $i+1=0$
if $i=1$. In this case, by \eqref{E:Cndr2-c},
\[d_i(x,y)\le 2r+d_{i+1}(x,y),\]
which proves \eqref{E:rho}.

Since $c_1(M,d_i)\le D_i$, for $i=0,1$, by Corollary~\ref{C:trunc}, there exist maps $\vf_i:M\to L_1$   such that for all $x,y\in M$
\[\min\{d_i(x,y), 2r\}\le \|\vf_i(x)-\vf_i(y)\|\le \frac{eD_i}{e-1}\min\{d_i(x,y), 2r\}.\]

Let $m_0\in M$ be a fixed element of $M$.

We define an embedding of $(M\times \mathbb{F}_2,d)$ into
$L_1\oplus_1L_1\oplus_1L_1\oplus_1\mathbb{R}$ by
\begin{equation*}
\begin{split}
G(x,0)&=(\vf_0(x),\vf_1(m_0), \psi(x), r)\\
G(x,1)&=(\vf_0(m_0),\vf_1(x), \psi(x), 0).
\end{split}
\end{equation*}

We have
\[\begin{split}\|G(x,0)-G(y,0)\|&=\|\vf_0(x)-\vf_0(y)\|+\|\vf_1(m_0)-\vf_1(m_0)\|
+\|\psi(x)-\psi(y)\|+|r-r|\\&\stackrel{\eqref{minemb}}{\asymp}
\rho_0(x,y)+h(x,y)\\
&\stackrel{\eqref{E:rho}}{\asymp}d_0(x,y).
\end{split}\]

\[\begin{split}\|G(x,1)-G(y,1)\|&=\|\vf_0(m_0)-\vf_0(m_0)\|+\|\vf_1(x)-\vf_1(y)\|
+\|\psi(x)-\psi(y)\|+ |0| \\
&\asymp
\rho_1(x,y)+h(x,y)\stackrel{\eqref{E:rho}}{\asymp}d_1(x,y).
\end{split}\]

By Lemma~\ref{L:dx0y1-rg}(b) we have
\[d((x,0),(y,1))\asymp r+h(x,y).\]

We need to compare this with

\[\begin{split}\|G(x,0)-G(y,1)\|&=\|\vf_0(x)-\vf_0(m_0)\|+\|\vf_1(m_0)-\vf_1(y)\|
+\|\psi(x)-\psi(y)\|+|r-0|\\
&\asymp
\rho_0(x,m_0)+\rho_1(m_0,y)+h(x,y)+r\\&\stackrel{\eqref{E:defrho}}{\asymp}
r+h(x,y). \hfill \qedhere\end{split}\]
\end{proof}

As an application  of Theorem~\ref{fixedr-omega}, we obtain the $L_1$-embeddability of twisted unions that are described in Example~\ref{E:GenNR&N} and includes, in particular,  the twisted union   described in Example~\ref{E:ConNR&N}. Thus
Corollary~\ref{fixedr-omega-1} answers Problem~\ref{P:TwistCube} in the negative (see also
Corollary~\ref{fixedr} below, for different proofs that the   twisted union   from Example~\ref{E:ConNR&N} is $L_1$-embeddable).

\begin{corollary}\label{fixedr-omega-1}
Let $M$ be subset of $L_1$,  $r>0$ be a constant,
$\varpi_0$ and $\varpi_1$ be concave non-decreasing continuous
functions on $[0,\infty)$ vanishing only at $0$, such that for all
$t>0$,
$|\varpi_0(t)-\varpi_1(t)|\le 2r,$ that is
\eqref{E:Cndr2-c}
 is satisfied for the metrics $d_i(x,y)\DEF  \varpi_i(\|x-y\|)$, $i=0,1$.

Then   the $r$-twisted union  $(M\times \mathbb{F}_2,d)$ of
$(M,\varpi_0(\|x-y\|))$ and $(M,\varpi_1(\|x-y\|))$    embeds into
$L_1$ with distortion $D$ bounded by an absolute constant
$(D\le1+2.776\Delta <26.6)$.
\end{corollary}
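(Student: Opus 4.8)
The plan is to deduce the corollary from Theorem~\ref{fixedr-omega}, whose three hypotheses I will verify with $D_0=D_1=K=\Delta$. Condition \eqref{E:Cndr2-c} for $d_i(x,y)=\varpi_i(\|x-y\|)$ is exactly the standing assumption $|\varpi_0(t)-\varpi_1(t)|\le 2r$. For the second hypothesis, note that $M\subseteq L_1$ and each $\varpi_i$ is concave, non-decreasing, continuous, and vanishes only at $0$, so Corollary~\ref{C:concave} produces an embedding of $(L_1,\varpi_i(\|x-y\|_1))$ into $L_1$ of distortion at most $\Delta$; restricting it to $M$ shows $c_1(M,d_i)\le\Delta$. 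The key observation for the third hypothesis \eqref{minemb} is that $\min\{d_0(x,y),d_1(x,y)\}=\varpi(\|x-y\|)$ where $\varpi:=\min\{\varpi_0,\varpi_1\}$, and the pointwise minimum of concave functions is again concave; hence $\varpi$ satisfies the hypotheses of Corollary~\ref{C:concave}, which yields a map $\psi$ with $\varpi(\|x-y\|)\le\|\psi(x)-\psi(y)\|\le\Delta\,\varpi(\|x-y\|)$, i.e.\ \eqref{minemb} with $K=\Delta$. Theorem~\ref{fixedr-omega} then gives an $L_1$-embedding of the $r$-twisted union with distortion bounded by a constant depending only on $\Delta$; this already settles Problem~\ref{P:TwistCube} in the negative and gives an affirmative answer to Problem~\ref{P:Lp} with $p=1$ for this class of spaces.

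To obtain the explicit bound $D\le 1+2.776\,\Delta$ I would re-run the proof of Theorem~\ref{fixedr-omega} with two sharpenings made available by the concavity of $\varpi$. First, since $\varpi$ is concave with $\varpi(0)=0$, the function $h(x,y)=\min\{d_0(x,y),d_1(x,y)\}=\varpi(\|x-y\|)$ is an honest metric on $M\subseteq L_1$; consequently Lemma~\ref{L:dx0y1-rg}(c) applies and gives the \emph{exact} cross-distance $d((x,0),(y,1))=r+h(x,y)$, removing the factor-$3$ slack present in part (b) of that lemma. Second, each truncated weight $\min\{\varpi_i(t),2r\}$ is itself concave, being the minimum of a concave function and a constant, so Corollary~\ref{C:concave} applies to it directly and produces maps $\varphi_i$ with $\rho_i(x,y)\le\|\varphi_i(x)-\varphi_i(y)\|\le\Delta\,\rho_i(x,y)$, where $\rho_i(x,y)=\min\{d_i(x,y),2r\}$; this is sharper than the distortion $e\Delta/(e-1)$ that Corollary~\ref{C:trunc} would give.

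With these inputs I would use the four-block embedding into $L_1\oplus_1 L_1\oplus_1 L_1\oplus_1\R$ in the style of the map $G$ from Theorem~\ref{fixedr-omega} (allowing the relative weights of the blocks and of the real coordinate to be free parameters), and estimate the stretch separately on the within-level-$0$ pairs, the within-level-$1$ pairs, the fibers $\{x=y,\ a\neq b\}$, and the general cross pairs, using \eqref{E:rho} for the first two types and the exact formula $r+h$ for the last two. The delicate point --- and the main obstacle to a good constant --- is the basepoint noise in the cross distances: a cross pair $(x,0),(y,1)$ picks up the terms $\|\varphi_0(x)-\varphi_0(m_0)\|$ and $\|\varphi_1(m_0)-\varphi_1(y)\|$, each only controlled by $2\Delta r$ through the truncation, and these must be played off against the genuine separation $r$ and the shared term $\|\psi(x)-\psi(y)\|\asymp h(x,y)$. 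Optimizing the free weights so as to balance this noise against the factor $2$ in \eqref{E:rho} is what yields the numerical constant $2.776$; inserting $\Delta\le(2\sqrt2+3)e/(e-1)<9.22$ then gives $D<26.6$.
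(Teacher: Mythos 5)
Your proposal is correct and follows essentially the same route as the paper: both reduce to Theorem~\ref{fixedr-omega} by observing that $\min\{\varpi_0,\varpi_1\}$ is again concave so Corollary~\ref{C:concave} yields \eqref{minemb}, and both obtain the numerical constant by applying Corollary~\ref{C:concave} directly to the (concave) truncated metrics $\rho_j$ instead of invoking Corollary~\ref{C:trunc}, using the exact cross-distance formula \eqref{E:dx0y1-r-2} from Lemma~\ref{L:dx0y1-rg}(c), and optimizing the scaling of the blocks (the paper's factor $0.694$ plays the role of your free weights). The paper likewise leaves the final tedious optimization to the reader.
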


\begin{proof}
Define for all $t\ge 0$
\begin{equation*}
\varpi_2(t)=\min\{\varpi_0(t),\varpi_1(t)\}.
\end{equation*}

Then $\varpi_2$ is a concave non-decreasing continuous function on
$[0,\infty)$ vanishing  only at $0$.
Thus, by Corollary~\ref{C:concave}, for $i=0,1,2$, $j=0,1$, the spaces
$(M,\varpi_i(\|x-y\|))$ and $(M,\rho_j)$ embed into $L_1$ with distortions bounded
above by $\Delta\le (2\sqrt{2}+3)e/(e-1)$. Thus \eqref{minemb} is satisfied and the corollary
follows by Theorem~\ref{fixedr-omega}.

To obtain the estimate in the parenthesis we use \eqref{E:dx0y1-r-2} and the maps $\vf_j:M\to L_1$ for  $j=0,1$, such that for all $x,y\in M$
\[0.694\cdot \rho_j(x,y)\le \|\vf_j(x)-\vf_j(y)\|\le 0.694\Delta  \rho_j(x,y).\]
The computation  is easy but a little tedious. We leave it to an interested reader.
\end{proof}

\section{On $L_1$-embeddability of generalized twisted unions}\lb{S:gentwistemb}

In this section we present two general results (Theorems~\ref{T:rLipschitz} and \ref{T:Basic})
on $L_1$-embeddability of generalized twisted unions which satisfy different natural restrictions, described in Lemmas~\ref{L:dx0y1-rg} and \ref{SL:dBtwCbs}, for which we obtained an equivalent formula for the twisted union distance between every pair of points of the union. As an application we obtain another proof, different from the one in Section~\ref{S:twistedemb}, that the twisted union described in Example~\ref{E:ConNR&N} embeds into $L_1$ with bounded distortion, see Corollary~\ref{fixedr}.

\begin{theorem}\label{T:rLipschitz}
Let $M$ be a metric space endowed with two metrics $d_0$ and
$d_1$,   and let $r:M\to (0,\infty)$ be a function such that \eqref{E:Cndr2} and \eqref{E:Cndr1}
are satisfied.

 Suppose   that the  function $r(\cdot)$ is Lipschitz with respect to both metrics $d_0$ and
$d_1$,  that is, suppose that there exists a constant $L>0$, such that for all $x,y\in M$
\begin{equation}\label{E:rhLipschitz-2}
 |r(x)-r(y)|\le L\min\{d_0(x,y), d_1(x,y)\}.
\end{equation}
Denote
\[h(x,y)\DEF \min\{d_0(x,y), d_1(x,y)\},\]
and, for $i=0,1$,
\[g_i(x,y)\DEF \min\{d_i(x,y), r(x)+r(y)\}.\]

If there exist constants $C_0,C_1,C_2$ such that for $i=0,1$,
$$c_1(M,g_i)\le C_i\ \  \text{and }\ \  c_1(M,h)\le C_2,$$
then  the  generalized twisted union $(M\times \mathbb{F}_2,d)$
 of $(M,d_0)$  and $(M,d_1)$ with the joining function $r(\cdot)$ embeds into $L_1$ with distortion bounded above by a constant
which depends only on $C_0,C_1, C_2$, and $L$.
\end{theorem}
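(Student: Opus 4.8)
The plan is to reduce the problem to embedding a single simpler ``reference'' function and then to build that embedding from the three given maps together with one elementary one-dimensional gadget. Keep the notation $h(x,y)=\min\{d_0(x,y),d_1(x,y)\}$ and $g_i(x,y)=\min\{d_i(x,y),r(x)+r(y)\}$ from the statement, and let $\Phi_0,\Phi_1,\Psi:M\to L_1$ realize $c_1(M,g_0)\le C_0$, $c_1(M,g_1)\le C_1$, $c_1(M,h)\le C_2$. First I would record two elementary inequalities. A short case check using \eqref{E:Cndr2} gives $d_i(x,y)\le g_i(x,y)+h(x,y)\le 2d_i(x,y)$, so the two intra-level distances of the twisted union are each $\asymp g_i+h$. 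Next, since $|r(x)-r(y)|\le L\,h(x,y)\le L\,d_i(x,y)$ and trivially $|r(x)-r(y)|\le r(x)+r(y)$, one obtains $|r(x)-r(y)|\le\max\{L,1\}\,g_i(x,y)$; this records that the $L_1$-valued map $x\mapsto\1_{[0,r(x)]}$, whose pairwise distances equal $|r(x)-r(y)|$, is harmless relative to $g_i$. Finally, Lemma~\ref{L:dx0y1-rg}(a) gives $d((x,0),(y,1))\asymp h(x,y)+\max\{r(x),r(y)\}$, which is $\asymp h(x,y)+r(x)+r(y)$ because $\max\{r(x),r(y)\}\le r(x)+r(y)\le 2\max\{r(x),r(y)\}$.

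Combining these, $d$ is bi-Lipschitz equivalent, with constants depending only on $C_0,C_1,C_2,L$, to the function $\tilde d=H+G$ on $M\times\mathbb{F}_2$, where $H((x,a),(y,b))=h(x,y)$ for all pairs and $G$ is given by $G((x,0),(y,0))=g_0(x,y)$, $G((x,1),(y,1))=g_1(x,y)$, $G((x,0),(y,1))=r(x)+r(y)$. The summand $H$ embeds into $L_1$ via $(x,a)\mapsto\Psi(x)$, so an $\ell_1$-direct sum reduces everything to producing a map $\Xi:M\times\mathbb{F}_2\to L_1$ with $\|\Xi(u)-\Xi(v)\|\asymp G(u,v)$. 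The key idea for $G$ is to read it as two copies of $M$ carrying $g_0$ and $g_1$, glued at one virtual point $*$ declared to lie at distance $r(x)$ from the level-$0$ copy of $x$ and $r(y)$ from the level-$1$ copy of $y$; then the cross distance $r(x)+r(y)$ is exactly the distance through $*$. Concretely I would embed, for each $i$, the one-point extension $\widehat g_i$ of $(M,g_i)$ by $*$ with $\widehat g_i(x,*)=r(x)$ through a map $\Lambda_i$ sending $*$ to the origin, set $\Xi(x,0)=(\Lambda_0(x),0)$ and $\Xi(y,1)=(0,\Lambda_1(y))$ in $L_1\oplus_1 L_1$, so that the cross distance becomes $\|\Lambda_0(x)\|+\|\Lambda_1(y)\|=\|\Lambda_0(x)-\Lambda_0(*)\|+\|\Lambda_1(y)-\Lambda_1(*)\|\asymp r(x)+r(y)$, while intra-level distances are those of $\widehat g_i$, namely $\asymp g_i$. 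I would take $\Lambda_i(x)=(\Phi_i(x)-c_i,\ \1_{[0,r(x)]})$ and $\Lambda_i(*)=(0,0)$: the first coordinate reproduces $g_i$ on $M$, and $\1_{[0,r(x)]}$ supplies the lower bound $\|\Lambda_i(x)-\Lambda_i(*)\|\ge r(x)$.

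The main obstacle is precisely the choice of the ``center'' $c_i\in L_1$ making $\|\Phi_i(x)-c_i\|\lesssim r(x)$ simultaneously for all $x$; this is the only non-formal step, and it is exactly the reason the constant-$r$ argument of Theorem~\ref{fixedr-omega} does not transfer verbatim. The naive choice $c_i=\Phi_i(x_0)$ for an arbitrary base point fails, since $\|\Phi_i(x)-\Phi_i(x_0)\|\le C_i\,g_i(x,x_0)\le C_i(r(x)+r(x_0))$ carries an unwanted additive $C_i r(x_0)$. The resolution is to anchor the center at the infimum of $r$: put $\iota=\inf_{x\in M}r(x)$. If $\iota>0$, choose $x_0$ with $r(x_0)<2\iota$, so $r(x_0)\le 2r(x)$ for every $x$ and $\|\Phi_i(x)-\Phi_i(x_0)\|\le C_i(r(x)+r(x_0))\le 3C_i\,r(x)$. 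If $\iota=0$, choose $x_n$ with $r(x_n)\to0$; then $(\Phi_i(x_n))$ is Cauchy because $\|\Phi_i(x_m)-\Phi_i(x_n)\|\le C_i(r(x_m)+r(x_n))\to0$, and by completeness of $L_1$ it converges to a point $c_i$ with $\|\Phi_i(x)-c_i\|=\lim_n\|\Phi_i(x)-\Phi_i(x_n)\|\le C_i\,r(x)$. In both cases $\|\Phi_i(x)-c_i\|\le 3C_i\,r(x)$, which together with $\|\1_{[0,r(x)]}\|=r(x)$ yields $\|\Lambda_i(x)-\Lambda_i(*)\|\asymp r(x)$.

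To finish I would verify that $\Lambda_i$ indeed embeds $\widehat g_i$: the lower bound $\|\Lambda_i(x)-\Lambda_i(y)\|\ge\|\Phi_i(x)-\Phi_i(y)\|\ge g_i(x,y)$ is immediate, and the upper bound $\|\Lambda_i(x)-\Lambda_i(y)\|\le C_i\,g_i(x,y)+|r(x)-r(y)|\le(C_i+\max\{L,1\})\,g_i(x,y)$ uses the absorption inequality from the first paragraph, so the one-dimensional gadget does not spoil the intra-level estimate. Assembling $\Xi$ from $\Lambda_0,\Lambda_1$ and then taking the direct sum with $(x,a)\mapsto\Psi(x)$ produces an embedding of $\tilde d$, hence of $d$, whose distortion, obtained by tracking the constants through these finitely many $\asymp$-steps, depends only on $C_0,C_1,C_2$, and $L$, as claimed.
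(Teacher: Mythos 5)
Your proof is correct and follows essentially the same route as the paper's: the inequality $d_i\le g_i+h\le 2d_i$, Lemma~\ref{L:dx0y1-rg}(a) for the cross distances, and an $\ell_1$-direct sum of the three given embeddings plus a one-dimensional $r$-gadget anchored at a (near-)minimizer of $r$ --- your center $c_i$ plays exactly the role of the paper's frozen value $\vf_i(m_0)$ with $r(m_0)=\min_{x}r(x)$, and your symmetric gadget $\1_{[0,r(x)]}$ (giving $r(x)+r(y)$ across levels) replaces the paper's single scalar coordinate $r(x)$ versus $0$ combined with \eqref{E:rhLipschitz-2}. The only genuine, if minor, addition is your completeness argument for the case where $\inf_x r(x)$ is not attained, which the paper does not need since $M$ is finite in Definition~\ref{D:TwUn}.
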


\begin{proof}
First we observe, that
 for $i=0,1$,  and all $x,y\in M$ we have
\begin{equation}\label{E:gi}
d_i(x,y)\le g_i(x,y)+h(x,y)\le 2d_i(x,y).
\end{equation}

Indeed, by the definitions of $h$ and $g_i$, the rightmost
inequality is clear. If for some $x,y\in M$, either
$g_i(x,y)=d_i(x,y)$ or $h(x,y)=d_i(x,y)$ then the leftmost
inequality also holds. The remaining case is when $g_i(x,y)=r(x)+r(y)<d_i(x,y)$
and $h(x,y)=d_{i+1}(x,y)<d_i(x,y)$, where $i+1=1$ if $i=0$, and $i+1=0$
if $i=1$. In this case, by \eqref{E:Cndr2},
\[d_i(x,y)\le d_{i+1}(x,y)+r(x)+r(y),\]
which proves \eqref{E:gi}.

Denote by $\psi, \vf_0, \vf_1$,   bi-Lipschitz embeddings  into $L_1$ of $(M,h)$, $(M,g_0)$,  and
 $(M,g_1)$, respectively.

Let $m_0\in M$ be such that
\begin{equation}\label{E:Def_m0}
r(m_0)=\min_{x\in M}r(x).
\end{equation}

We define an embedding of $(M\times \mathbb{F}_2,d)$ into
$L_1\oplus_1L_1\oplus_1L_1\oplus_1\mathbb{R}$ by
\begin{equation*}
\begin{split}
G(x,0)&=(\vf_0(x),\vf_1(m_0), \psi(x), r(x))\\
G(x,1)&=(\vf_0(m_0),\vf_1(x), \psi(x), 0).
\end{split}
\end{equation*}

We have
\[\begin{split}\|G(x,0)-G(y,0)\|&=\|\vf_0(x)-\vf_0(y)\|+\|\vf_1(m_0)-\vf_1(m_0)\|
+\|\psi(x)-\psi(y)\|+|r(x)-r(y)|\\&\stackrel{\eqref{E:rhLipschitz-2}}{\asymp}
g_0(x,y)+0+h(x,y) \\
&\stackrel{\eqref{E:gi}}{\asymp}d_0(x,y).
\end{split}\]

\[\begin{split}\|G(x,1)-G(y,1)\|&=\|\vf_0(m_0)-\vf_0(m_0)\|+\|\vf_1(x)-\vf_1(y)\|
+\|\psi(x)-\psi(y)\|+ |0| \\
&\asymp
0+g_1(x,y)+h(x,y)+0\stackrel{\eqref{E:gi}}{\asymp}d_1(x,y).
\end{split}\]

By Lemma~\ref{L:dx0y1-rg}(a) we have
\[d((x,0),(y,1))\asymp h(x,y) +\max\{r(x), r(y)\}.\]

We need to compare this with

\[\begin{split}\|G(x,0)-G(y,1)\|&=\|\vf_0(x)-\vf_0(m_0)\|+\|\vf_1(m_0)-\vf_1(y)\|
+\|\psi(x)-\psi(y)\|+|r(x)-0|\\
&\asymp
g_0(x,m_0)+g_1(m_0,y)+h(x,y)+r(x)\\&\asymp
\min\{d_0(x,m_0),r(x)+r(m_0)\}+\min\{d_1(y,m_0),r(y)+r(m_0)\}\\&{\ \ \ \ }+h(x,y)+r(x)
\\&\stackrel{\eqref{E:Def_m0}\&\eqref{E:rhLipschitz-2}}{\asymp}h(x,y)+\max\{r(x), r(y)\}. \hfill \qedhere\end{split}\]
\end{proof}

\begin{theorem}\label{T:Basic}
Let $M$ be a metric space endowed with two metrics $d_0$ and
$d_1$,   and let $r:M\to (0,\infty)$ be a function such that \eqref{E:Cndr2} and \eqref{E:Cndr1}
are satisfied.

Suppose   that there exists a function $f:M\times M\to[0,\infty)$ and constants $C_1,C_2,C_3,C_4>0$ such that $c_1(M,d_1)\le C_1$, $c_1(M, \min\{f(x,y), r(x)+r(y)\})\le C_2$, and
for all $x,y\in M$
\begin{equation}\label{E:RelBetw_ds}
\frac1{C_3}d_0(x,y)\le d_1(x,y)+f(x,y)\le C_4d_0(x,y).
\end{equation}

Then the  generalized twisted union $(M\times \mathbb{F}_2,d)$
 of $(M,d_0)$  and $(M,d_1)$ with the joining function $r(\cdot)$ embeds into $L_1$ with distortion bounded above by a constant
which depends only on $C_1, C_2, C_3$, and $C_4$.
\end{theorem}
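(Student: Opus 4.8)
The plan is to follow the template of the proofs of Theorems~\ref{fixedr-omega} and~\ref{T:rLipschitz}: split $d_0$ into a part shared by both layers and a layer-$0$-specific part, and write down an explicit embedding of $(M\times\mathbb{F}_2,d)$ into $L_1\oplus_1 L_1\oplus_1\mathbb{R}$. First I would record the cross distances. The upper estimate in~\eqref{E:RelBetw_ds} gives $d_1(x,y)\le d_1(x,y)+f(x,y)\le C_4\,d_0(x,y)$, so~\eqref{E:d1lecd0-22} holds with $C=C_4$ and Lemma~\ref{SL:dBtwCbs} applies, yielding $d((x,0),(y,1))\asymp d_1(x,y)+r(x)$; the same inequality also gives $\min\{d_0,d_1\}\asymp d_1$.

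The crux of the argument --- and the step I expect to be the main obstacle --- is that although only the \emph{truncated} function $g(x,y)\DEF\min\{f(x,y),r(x)+r(y)\}$ is assumed $L_1$-embeddable, it already reconstructs $d_0$, in the sense that $d_0\asymp d_1+g$. The bound $d_1+g\le d_1+f\le C_4\,d_0$ is immediate since $g\le f$. For the reverse inequality I would split on the truncation level: if $f(x,y)\le r(x)+r(y)$ then $g=f$ and $d_0\le C_3(d_1+f)=C_3(d_1+g)$ by~\eqref{E:RelBetw_ds}; if $f(x,y)>r(x)+r(y)$ then $g=r(x)+r(y)$ and condition~\eqref{E:Cndr2} gives $d_0\le d_1+r(x)+r(y)=d_1+g$. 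The point is precisely that whenever $f$ exceeds the truncation level $r(x)+r(y)$, condition~\eqref{E:Cndr2} already controls $d_0$ by $d_1+(r(x)+r(y))$, so discarding the excess of $f$ is harmless; recognizing that the seemingly weaker hypothesis $c_1(M,g)\le C_2$ suffices is the heart of the proof.

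With this in hand the construction is routine. Choose $\psi,\vf:M\to L_1$ realizing $c_1(M,d_1)\le C_1$ and $c_1(M,g)\le C_2$, let $m_0\in M$ satisfy $r(m_0)=\min_{x\in M}r(x)$, and define
\[
G(x,0)=(\psi(x),\vf(x),r(x)),\qquad G(x,1)=(\psi(x),\vf(m_0),0)
\]
into $L_1\oplus_1 L_1\oplus_1\mathbb{R}$. On layer $0$ one gets $\|G(x,0)-G(y,0)\|\asymp d_1(x,y)+g(x,y)+|r(x)-r(y)|$, and since~\eqref{E:Cndr1} bounds $|r(x)-r(y)|\le d_0(x,y)+d_1(x,y)\asymp d_1(x,y)+g(x,y)$, this is $\asymp d_1+g\asymp d_0$. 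On layer $1$ the last two coordinates are constant, so $\|G(x,1)-G(y,1)\|\asymp d_1(x,y)$. For the mixed pairs, $\|G(x,0)-G(y,1)\|\asymp d_1(x,y)+g(x,m_0)+r(x)$, and the choice of $m_0$ gives $g(x,m_0)\le r(x)+r(m_0)\le 2r(x)$, so the right-hand side is $\asymp d_1(x,y)+r(x)\asymp d((x,0),(y,1))$. All implied constants depend only on $C_1,C_2,C_3,C_4$, which is the desired conclusion.
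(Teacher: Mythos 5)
Your proposal is correct and follows essentially the same route as the paper: the same two-case verification that $d_0\asymp d_1+\min\{f,r(x)+r(y)\}$ (splitting on whether $f$ exceeds $r(x)+r(y)$ and invoking \eqref{E:Cndr2} in the second case), the same appeal to Lemma~\ref{SL:dBtwCbs} for the mixed distances, and the identical embedding $(x,0)\mapsto(\vf_1(x),\psi(x),r(x))$, $(x,1)\mapsto(\vf_1(x),\psi(m_0),0)$ with $m_0$ minimizing $r$ (you merely swap the names of the two maps). The only difference is that you spell out explicitly the two-case analysis that the paper leaves as ``straightforward.''
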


\remove{
\begin{note} It is more convenient to prove Lemma~\ref{SL:dBtwCbs} under
condition \eqref{E:d1lecd0}, which is weaker than
\eqref{E:RelBetw_ds}.
\end{note}}

\begin{proof}
 Similarly as in the proof of \eqref{E:gi}, by a straightforward analysis of two cases, the conditions \eqref{E:RelBetw_ds} and \eqref{E:Cndr2} imply
that for all $x,y\in M$
\begin{equation}\label{E:RelBetw_ds2}
\frac{1}{C_3}d_0(x,y)\le
d_1(x,y)+\min\{f(x,y),r(x)+r(y)\}\le C_4d_0(x,y).
\end{equation}

Let $\vf_1$ and $\psi$ be  bi-Lipschitz embeddings  into $L_1$ of $(M,d_1)$  and
 $(M, \min\{f(x,y), r(x)+r(y)\})$, respectively.
Let $m_0\in M$ be such that
\begin{equation}\label{E:Def_m2} r(m_0)=\min_{x\in M}r(x).
\end{equation}

We
define the embedding of $(M\times \mathbb{F}_2,d)$ into
$L_1\oplus_1L_1\oplus_1\mathbb{R}$ by
\begin{equation}\label{E:GenEmbSig2}
\begin{split}
F(x,0)&=(\vf_1(x), \psi(x), r(x))\\
F(x,1)&=(\vf_1(x),\psi(m),0).
\end{split}
\end{equation}

Then we have
\[\begin{split}\|F(x,0)-F(y,0)\|&=\|\vf_1(x)-\vf_1(y)\|+\|\psi(x)-\psi(y)\|+|r(x)-r(y)|
\\&\asymp
d_1(x,y)+\min\{f(x,y),r(x)+r(y)\}+|r(x)-r(y)|\\
&\stackrel{\eqref{E:RelBetw_ds2}\&\eqref{E:Cndr1}}{\asymp}d_0(x,y).
\end{split}\]

\[\|F(x,1)-F(y,1)\|=\|\vf_1(x)-\vf_1(y)\|+\|\psi(m_0)-\psi(m_0)\|+|0| \asymp
d_1(x,y).\]

The condition \eqref{E:RelBetw_ds} implies that for all $x,y\in M$,
$d_1(x,y)\le C_4d_0(x,y)$, and therefore, by Lemma \ref{SL:dBtwCbs},  we have
\[d((x,0),(y,1))\asymp d_1(x,y)+r(x).\]

We need to compare this with

\[\begin{split}\|F(x,0)-F(y,1)\|&=\|\vf_1(x)-\vf_1(y)\|+\|\psi(x)-\psi(m_0)\|
+|r(x)|\\
&\asymp
d_1(x,y)+\min\{f(x,m_0),r(x)+r(m_0)\}+r(x)\\&\stackrel{\eqref{E:Def_m2}}{\asymp}d_1(x,y)+r(x).
\hfill \qedhere\end{split}\]
\end{proof}

\remove{
\begin{remark} If in Theorem~\ref{T:GenTwPst1}, $d_2(x,y)\le C(d_0(x,y)-d_1(x,y))$ for all $x,y\in M$ and some constant $C$,
then by \eqref{E:Cndr2}, $d_2(x,y)\le C(r(x)+r(y))$, so the
existence of the map $\Psi$ is equivalent to the
$L_1$-embeddability of $(M,d_2)$.
\end{remark}}

Our next result is a direct consequence of either of Theorems~\ref{fixedr-omega}, \ref{T:rLipschitz}, or \ref{T:Basic}.

We note that, by \eqref{E:Ex3.1C}, the space described in Example~\ref{E:ConNR&N} satisfies the assumptions of
Corollary~\ref{fixedr}. Thus we obtain different proofs of the $L_1$-embaddibility of this space, cf.  Corollary~\ref{fixedr-omega-1} above.

\begin{corollary}\label{fixedr}
Let  $r>0$ and $M$ be a metric space endowed with two metrics $d_0$ and
$d_1$,  such that \eqref{E:Cndr2-c}
 is satisfied, and for $i=0,1$, the space $(M,d_i)$ embeds into
$L_1$ with distortion at most $D_i$. Suppose also that there exist
a constant $C>0$  such that for all
$x,y\in M$
\begin{equation}\lb{Kest}
d_1(x,y)\le Cd_0(x,y).
\end{equation}

 Then the $r$-twisted union
$(M\times \mathbb{F}_2,d)$ of $(M,d_0)$  and $(M,d_1)$ 
 embeds into $L_1$ with distortion
bounded by a  constant $D$ that depends only on $D_0, D_1$ and $C$.
\end{corollary}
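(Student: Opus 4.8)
The plan is to derive Corollary~\ref{fixedr} as an immediate consequence of one of the preceding theorems, since the author explicitly states it follows from any of Theorems~\ref{fixedr-omega}, \ref{T:rLipschitz}, or \ref{T:Basic}. The cleanest route is through Theorem~\ref{fixedr-omega}, because the hypotheses of the corollary are almost verbatim a special case. We are given $r>0$ constant, metrics $d_0,d_1$ satisfying \eqref{E:Cndr2-c}, embeddings of $(M,d_i)$ into $L_1$ with distortion $\le D_i$, and the comparison \eqref{Kest}: $d_1(x,y)\le Cd_0(x,y)$. Theorem~\ref{fixedr-omega} needs exactly these same first hypotheses, plus one extra ingredient: a map $\psi:M\to L_1$ realizing \eqref{minemb}, i.e. a bi-Lipschitz embedding of $(M,h)$ into $L_1$ where $h(x,y)=\min\{d_0(x,y),d_1(x,y)\}$ with constant $K$ depending only on the given data.

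So the only real work is to verify \eqref{minemb}. First I would observe that condition \eqref{Kest} forces the minimum to behave like a single metric: since $d_1(x,y)\le Cd_0(x,y)$, we have $\min\{d_0(x,y),d_1(x,y)\}\le d_1(x,y)$ trivially, and also $d_1(x,y)\le Cd_0(x,y)$ together with $d_1(x,y)\le\max\{1,C\}\min\{d_0,d_1\}$ (checking the two cases according to which of $d_0,d_1$ is smaller) gives
\[
\min\{d_0(x,y),d_1(x,y)\}\asymp d_1(x,y),
\]
with constants depending only on $C$ — this is precisely the observation recorded before \eqref{E:danebdCd0} in the text. Concretely, $\tfrac{1}{C}d_1\le d_0$ when $d_0\ge d_1$, and in all cases $\min\{d_0,d_1\}\ge\tfrac{1}{\max\{1,C\}}\,d_1$, while $\min\{d_0,d_1\}\le d_1$.

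**The key step is then to take $\psi$ to be (a scaling of) the given embedding of $(M,d_1)$.** By hypothesis there is $\phi_1:M\to L_1$ with $d_1(x,y)\le\|\phi_1(x)-\phi_1(y)\|\le D_1 d_1(x,y)$. Combining this with the two-sided comparison $\min\{d_0,d_1\}\asymp d_1$ just established, $\phi_1$ satisfies a two-sided estimate against $h(x,y)=\min\{d_0,d_1\}$ with distortion depending only on $D_1$ and $C$; after normalizing by the appropriate constant factor so the lower estimate reads $h(x,y)\le\|\psi(x)-\psi(y)\|$, we obtain \eqref{minemb} with $K=K(D_1,C)$. All hypotheses of Theorem~\ref{fixedr-omega} are now in force, and it yields an $L_1$-embedding of the $r$-twisted union with distortion bounded by a constant depending only on $D_0,D_1,K$, hence only on $D_0,D_1,C$, which is exactly the claim.

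**I expect no genuine obstacle here** — the content is entirely in the elementary equivalence $\min\{d_0,d_1\}\asymp d_1$ under \eqref{Kest}, which is a one-line case check, and in recognizing that the embedding of $(M,d_1)$ doubles as the required map $\psi$. The only point demanding a little care is bookkeeping the constants so that the final bound is seen to depend on $D_0,D_1,C$ alone and not on $M$ or its size; this is automatic because every constant produced ($K$ from the comparison, then $D$ from Theorem~\ref{fixedr-omega}) is a function of the given parameters only. For completeness one could alternatively note that the hypotheses directly fit Lemma~\ref{SL:dBtwCbs} (via \eqref{E:d1lecd0} with the constant $C$) and Theorem~\ref{T:Basic} (taking $f\equiv 0$, so that $d_1+f\asymp d_1\asymp d_0$ under \eqref{Kest}), giving the promised second and third proofs, but the argument via Theorem~\ref{fixedr-omega} is the most transparent.
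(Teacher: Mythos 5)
Your main argument is correct and coincides with the paper's own (first) proof: the authors likewise observe that \eqref{Kest} gives $\min\{1,1/C\}\,d_1(x,y)\le\min\{d_0(x,y),d_1(x,y)\}\le d_1(x,y)$, so the given embedding of $(M,d_1)$ serves as $\psi$ in \eqref{minemb} with $K$ depending only on $D_1$ and $C$, and Theorem~\ref{fixedr-omega} applies. One caveat on your closing aside: taking $f\equiv 0$ in Theorem~\ref{T:Basic} does not work, because \eqref{Kest} is one-sided --- it gives $d_1\le Cd_0$ but no lower bound $d_0\lesssim d_1$, so $d_1+f=d_1$ need not dominate $\frac{1}{C_3}d_0$ as required by \eqref{E:RelBetw_ds}; the paper's third proof instead takes $f=d_0$, for which $d_0\le d_1+d_0\le (C+1)d_0$ and $c_1(M,\min\{d_0,2r\})$ is controlled by Corollary~\ref{C:trunc}.
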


\begin{proof}
We present three short proofs, each based on Theorems~\ref{fixedr-omega}, \ref{T:rLipschitz}, or \ref{T:Basic}, respectively.

Note that \eqref{Kest} implies that
\[\min\big\{1,1/C\big\}d_1(x,y)\le \min\{d_0(x,y), d_1(x,y)\}\le d_1(x,y).\]

Thus, since $c_1(M,d_1)\le D_1$, we have that
$c_1(M,\min\{d_0(x,y), d_1(x,y)\})<\infty$ and  \eqref{minemb} is satisfied, so Corollary~\ref{fixedr} follows from Theorem~\ref{fixedr-omega}.

Moreover, when the function $r(\cdot)$ is constant, then the functions  $g_i$, for $i=0,1$,
defined in Theorem~\ref{T:rLipschitz} are
$g_i(x,y)= \min\{d_i(x,y), 2r\}$, and thus, by  Corollary~\ref{C:trunc},
$c_1(M,g_i)\le eD_i/(e-1)$. Hence Corollary~\ref{fixedr} follows from Theorem~\ref{T:rLipschitz}.

Similarly, if, in  the notation of Theorem~\ref{T:Basic}, we define the function
$f$ to be equal to $d_0$. Then, by \eqref{Kest}, the inequality
\eqref{E:RelBetw_ds2} is satisfied with $C_3=1$ and $C_4=C+1$. Since $c_1(M,d_0)\le D_0$,
by  Corollary~\ref{C:trunc}, $c_1(M,\min\{d_0(x,y),2r\})\le eD_0/(e-1)$.
Thus, by Theorem~\ref{T:Basic}, Corollary~\ref{fixedr} is satisfied.
\end{proof}

\section{Lower bound on distortion}\label{S:lowerbound}

The goal of this section is to show that the lower bound on
distortion of the union which was found in \cite[Theorem 1.2 and
Section 3]{MM16} for Hilbert space is also valid for $L_1$ and
many other Banach and metric spaces. Also, in some sense, our
proof is simpler.

\begin{definition}\label{D:Stable} {\rm A metric space
$(X,d)$ is called \emph{stable} if for any two bounded sequences
$\{x_n\}$ and $\{y_m\}$ in $X$ and for any two free ultrafilters
$\mju$ and $\mv$ on $\mathbb{N}$
\[\lim_{n,\mju}\lim_{m,\mv}d(x_n,y_m)=\lim_{m,\mv}\lim_{n,\mju}d(x_n,y_m).\]}
\end{definition}

This notion was introduced in the context of Banach spaces by
Krivine and Maurey \cite{KM81}. In the context of metric spaces
this definition was introduced, in a slightly different, but
equivalent form, in \cite[p.~126]{Gar82}. (See \cite{Gue92} for an
account on stable Banach spaces.)

To put our example into context we recall two simple well-known
observations.

\begin{observation}[\cite{KM81}] Hilbert space is stable.
\end{observation}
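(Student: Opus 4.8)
The plan is to exploit the Hilbert space identity $\|x-y\|^2=\|x\|^2-2\re\langle x,y\rangle+\|y\|^2$, which reduces the commutation of iterated ultrafilter limits of the metric $d(x,y)=\|x-y\|$ to the corresponding statement for the inner product. First I would record the elementary facts that every bounded scalar sequence has a limit along any ultrafilter (by compactness of closed bounded intervals) and that such limits are preserved by continuous functions. Since $\{x_n\}$ and $\{y_m\}$ are bounded, the quantities $\|x_n\|^2$, $\|y_m\|^2$, and $\langle x_n,y_m\rangle$ are all bounded, so every iterated limit in question exists. Because $t\mapsto\sqrt t$ is continuous on $[0,\infty)$, it suffices to prove that the iterated limits of $\|x_n-y_m\|^2$ agree; and since $\lim_{n,\mathcal{U}}\|x_n\|^2$ and $\lim_{m,\mathcal{V}}\|y_m\|^2$ each depend on only one index and pass freely through the other limit, this in turn reduces to showing
\[\lim_{n,\mathcal{U}}\lim_{m,\mathcal{V}}\langle x_n,y_m\rangle=\lim_{m,\mathcal{V}}\lim_{n,\mathcal{U}}\langle x_n,y_m\rangle.\]

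The key step is to introduce the weak ultrafilter limits of the two sequences. For a fixed vector $z$ the scalar sequence $\langle x_n,z\rangle$ is bounded by $(\sup_n\|x_n\|)\|z\|$, so $z\mapsto\lim_{n,\mathcal{U}}\langle x_n,z\rangle$ is a bounded linear functional; by the Riesz representation theorem there is a vector $x_\infty$ with $\langle x_\infty,z\rangle=\lim_{n,\mathcal{U}}\langle x_n,z\rangle$ for all $z$. Define $y_\infty$ analogously from $\{y_m\}$ and $\mathcal{V}$. Then for each fixed $n$ one has $\lim_{m,\mathcal{V}}\langle x_n,y_m\rangle=\langle x_n,y_\infty\rangle$ directly from the definition of $y_\infty$, and applying the definition of $x_\infty$ to the fixed vector $y_\infty$ gives $\lim_{n,\mathcal{U}}\langle x_n,y_\infty\rangle=\langle x_\infty,y_\infty\rangle$; hence the left-hand iterated limit equals $\langle x_\infty,y_\infty\rangle$. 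By the symmetric computation the right-hand iterated limit equals $\langle y_\infty,x_\infty\rangle$, and since the inner product is Hermitian these are complex conjugates, so their real parts coincide (in the real case they are simply equal). This yields the displayed equality for the inner products, hence for $\|x_n-y_m\|^2$, and finally, after taking square roots, for $\|x_n-y_m\|$, which is exactly stability.

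I expect the only delicate point to be the justification that the weak ultrafilter limit $x_\infty$ exists and that $\lim_{m,\mathcal{V}}\langle x_n,y_m\rangle=\langle x_n,y_\infty\rangle$; both are immediate once one observes that boundedness guarantees existence of all scalar ultrafilter limits and that the map $z\mapsto\lim_{n,\mathcal{U}}\langle x_n,z\rangle$ is bounded and linear. Notably, no separability or weak compactness of the ambient space is needed, since the limiting functional is built pointwise in $z$ and represented via Riesz; this is precisely what makes the argument valid for an arbitrary Hilbert space.
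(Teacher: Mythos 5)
Your proof is correct and follows essentially the same route as the paper: expand $\|x_n-y_m\|^2$ via the inner product and reduce stability to the commutation of the iterated ultrafilter limits of $\langle x_n,y_m\rangle$. The only difference is that the paper treats that commutation as self-evident in a one-line display, whereas you justify it properly by constructing the weak ultrafilter limits $x_\infty,y_\infty$ via Riesz representation and showing both iterated limits equal $\langle x_\infty,y_\infty\rangle$.
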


\begin{proof}
\[\lim_{n,\mju}\lim_{m,\mv}\|x_n-y_m\|^2=\lim_{n,\mju}\|x_n\|^2+\lim_{m,\mv}\|y_m\|^2
-2\lim_{n,\mju}\lim_{m,\mv}\langle x_n,y_m\rangle=
\lim_{m,\mv}\lim_{n,\mju}\|x_n-y_m\|^2. \ \qedhere
\]\end{proof}

\begin{observation}[\cite{Sch38}] The space $L_1(\mathbb{R})$ with the metric $\|x-y\|_1^{\frac12}$ is isometric to a subset of Hilbert space.
\end{observation}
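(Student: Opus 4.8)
The plan is to exhibit an explicit isometric embedding of $(L_1(\R),\,\|x-y\|_1^{1/2})$ into the Hilbert space $L_2(\R\times\R)$, built from the elementary identity that expresses the absolute value of a real number as a squared $L_2$-distance between two indicator functions. First I would record, for real numbers $s,t$, the identity
$$|s-t|=\int_\R\big(\1_{\{u<s\}}-\1_{\{u<t\}}\big)^2\,du,$$
which holds because the integrand equals $1$ precisely when $u$ lies between $s$ and $t$ and equals $0$ otherwise (here one uses that $|a-b|=(a-b)^2$ for $a,b\in\{0,1\}$). Integrating over the underlying parameter $\tau$ of $L_1(\R)$ and applying Tonelli's theorem (all integrands are nonnegative) gives, for $x,y\in L_1(\R)$,
$$\|x-y\|_1=\int_\R\int_\R\big(\1_{\{u<x(\tau)\}}-\1_{\{u<y(\tau)\}}\big)^2\,du\,d\tau.$$

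The second step addresses the fact that $\tau\mapsto\1_{\{u<x(\tau)\}}$ is not itself square-integrable on $\R\times\R$, so one cannot simply send $x$ to that indicator. I would fix a base point $x_0\in L_1(\R)$ (say $x_0=0$) and define $\Phi:L_1(\R)\to L_2(\R\times\R)$ by
$$\Phi(x)(\tau,u)=\1_{\{u<x(\tau)\}}-\1_{\{u<x_0(\tau)\}}.$$
For each fixed $\tau$ this function of $u$ is $\pm1$ on the interval between $x_0(\tau)$ and $x(\tau)$ and $0$ elsewhere, so $\int_\R|\Phi(x)(\tau,u)|^2\,du=|x(\tau)-x_0(\tau)|$ and hence $\|\Phi(x)\|_2^2=\|x-x_0\|_1<\infty$, confirming $\Phi(x)\in L_2(\R\times\R)$. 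The base-point terms cancel in the difference, giving $\Phi(x)(\tau,u)-\Phi(y)(\tau,u)=\1_{\{u<x(\tau)\}}-\1_{\{u<y(\tau)\}}$, so the displayed formula above yields $\|\Phi(x)-\Phi(y)\|_2^2=\|x-y\|_1$, that is $\|\Phi(x)-\Phi(y)\|_2=\|x-y\|_1^{1/2}$. This is exactly the asserted isometry onto a subset of Hilbert space.

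Alternatively one may phrase the argument through Schoenberg's criterion: a metric $\rho$ embeds isometrically in Hilbert space iff $\rho^2$ is conditionally negative definite, and the same indicator representation verifies this for $\rho(x,y)^2=\|x-y\|_1$. Indeed, for scalars with $\sum_i c_i=0$ and $a_i:=\1_{\{u<x_i(\tau)\}}\in\{0,1\}$, the cancellation of the $a_i^2$ and $a_j^2$ terms leaves $\sum_{i,j}c_ic_j(a_i-a_j)^2=-2\big(\sum_i c_i a_i\big)^2\le0$ pointwise in $(\tau,u)$, whence $\sum_{i,j}c_ic_j\|x_i-x_j\|_1\le0$ after integration. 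I expect the only genuine subtlety, and the point that must be handled carefully, to be the integrability bookkeeping in the second step: one must pass to the affine (base-pointed) embedding to land in $L_2$ rather than naively mapping into the non-square-integrable indicators, and one should note the joint measurability of $(\tau,u)\mapsto\1_{\{u<x(\tau)\}}$ and the applicability of Tonelli's theorem. Everything else reduces to the elementary pointwise identity $|a-b|=(a-b)^2$ on $\{0,1\}$ together with the constraint $\sum_i c_i=0$.
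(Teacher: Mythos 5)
Your proposal is correct and is essentially the paper's own argument: with the base point $x_0=0$, your map $\Phi(x)(\tau,u)=\1_{\{u<x(\tau)\}}-\1_{\{u<0\}}$ is exactly the paper's map $T$ (which takes the values $1$, $-1$, $0$ according to whether $u$ lies in $(0,x(\tau)]$, $(x(\tau),0)$, or neither), and the computation $\|\Phi(x)-\Phi(y)\|_2^2=\|x-y\|_1$ is the same Tonelli calculation. The added remark via Schoenberg's conditional negative definiteness is a correct alternative phrasing but not needed.
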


\begin{proof}{\rm(\cite{Nao11})}  We define a map $T:L_1(\mathbb{R})\to
L_\infty(\mathbb{R}\times\mathbb{R})$ by:
\begin{equation*}\label{eq:def T}
T(f)(t,s)\stackrel{\rm def}{=} \left\{
\begin{array}{rl}1& \hbox{if } 0< s\le f(t),\\
-1 & \hbox{if } f(t)<s<0,\\
0& \mathrm{otherwise}.\end{array}\right.
\end{equation*}
For all $f,g\in L_1(\mathbb{R})$ we have:
\begin{equation*}\label{eq:difference}
|T(f)(t,s)-T(g)(t,s)|= \left\{
\begin{array}{ll}1& \hbox{if } g(t)< s\le f(t)\ \mathrm{or}\ f(t)< s\le g(t),\\
0& \mathrm{otherwise}.\end{array}\right.
\end{equation*}
Therefore
\begin{equation*}
\begin{split}
\left\|T(f)-T(g)\right\|_{L_2(\mathbb{R}\times \mathbb{R})}^2
&=\int_{\mathbb{R}} \left(\int_{(g(t),f(t)]{~\rm or~ }(f(t),g(t)]}
1 \ ds\right)dt=\int_{\mathbb{R}}
|f(t)-g(t)|dt\\&=\|f-g\|_{L_1(\mathbb{R})}.\qedhere
\end{split}
\end{equation*}
\end{proof}

\begin{corollary}[\cite{KM81}]\label{C:L1} The space $L_1$ is stable.
\end{corollary}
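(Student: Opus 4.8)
The plan is to deduce Corollary~\ref{C:L1} by combining the two preceding observations, which together say that a certain snowflaking of $L_1$ lands inside Hilbert space, with the fact that Hilbert space is stable and that stability is inherited by the relevant operations. Concretely, the Schoenberg-type Observation~\ref{C:L1} (the one immediately above) provides an isometric embedding $T$ of the metric space $(L_1,\|x-y\|_1^{1/2})$ into a Hilbert space $H=L_2(\mathbb{R}\times\mathbb{R})$, and the Krivine--Maurey Observation states that $H$ is stable. So the strategy is: first transport stability of $H$ back to the snowflaked space $(L_1,\|\cdot\|_1^{1/2})$ through the isometry, and then remove the snowflaking to recover stability of $(L_1,\|\cdot\|_1)$ itself.

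First I would observe that stability, as formulated in Definition~\ref{D:Stable}, is manifestly an isometric invariant: the defining condition involves only the values $d(x_n,y_m)$ through the iterated ultrafilter limits, so if $(X,d_X)$ embeds isometrically into a stable space $(Y,d_Y)$, then $(X,d_X)$ is stable (boundedness of sequences is also preserved since the embedding is isometric). Applying this with $d_X(x,y)=\|x-y\|_1^{1/2}$, $(Y,d_Y)=H$, and the embedding $T$, I conclude that the snowflaked metric space $(L_1,\|x-y\|_1^{1/2})$ is stable.

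The second step is to pass from the square-root metric back to the original $L_1$ metric. Here the point is that the map $t\mapsto t^2$ is continuous and strictly increasing on $[0,\infty)$, so for any real-valued net it commutes with ultrafilter limits: for bounded sequences $\{x_n\},\{y_m\}$ in $L_1$ and free ultrafilters $\mju,\mv$,
\[
\lim_{n,\mju}\lim_{m,\mv}\|x_n-y_m\|_1
=\lim_{n,\mju}\Big(\lim_{m,\mv}\|x_n-y_m\|_1^{1/2}\Big)^2
=\Big(\lim_{n,\mju}\lim_{m,\mv}\|x_n-y_m\|_1^{1/2}\Big)^2.
\]
Since the snowflaked space is stable, the doubly-iterated square-root limits agree when the order of $\mju$ and $\mv$ is swapped; squaring both sides and reversing the computation then gives equality of the two iterated $\|\cdot\|_1$-limits, which is exactly stability of $L_1$. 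One minor bookkeeping point to check is that boundedness in the $L_1$ metric is equivalent to boundedness in the square-root metric, so that the hypotheses of Definition~\ref{D:Stable} transfer cleanly; this is immediate from $t\mapsto t^{1/2}$ being a homeomorphism of $[0,\infty)$.

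The main obstacle is essentially conceptual rather than computational: one must be careful that all the quantities inside the iterated limits are genuine real-valued ultrafilter limits (which always exist for bounded real nets, guaranteeing well-definedness), and that the continuous strictly monotone function $t\mapsto t^2$ may indeed be pulled in and out of these limits. Once this ``functoriality'' of ultrafilter limits under continuous maps is granted, the argument is formal. In short, stability is an isometric invariant, so it passes through $T$ to the snowflaked space, and it is invariant under applying a strictly increasing continuous reparametrization to the metric, so it descends from the snowflake back to $L_1$.
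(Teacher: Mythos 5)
Your argument is correct and is exactly the proof the paper intends (the paper states the corollary without an explicit proof, but places it immediately after the two observations precisely so that it follows by combining them): isometrically transport stability of Hilbert space to the snowflaked space $(L_1,\|x-y\|_1^{1/2})$, then undo the snowflaking via the continuity and monotonicity of $t\mapsto t^2$, which commutes with ultrafilter limits of bounded real sequences. No gaps.
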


\begin{example}\label{E:UnstUnion}
Consider the disjoint union of two copies of $\mathbb{N}$:
\[\{\bar 1, \bar 2,\dots,\bar n,\dots\}\cup\,\{\underline
1,\underline 2, \dots,\underline n, \dots\}.\] Endow this union
with the following graph structure: Each $\underline i$ is
adjacent to  $\bar j$ \wtw\  $j\ge i$, and there are no other
edges. Then
\[\lim_{i\to\infty} d(\underline j,\bar i)=1~~ \hbox{ and }~~
\lim_{j\to\infty} d(\underline j,\bar i)=3.\] Observe that
$d(\underline i,\underline j)=2$ and $d(\bar i, \bar j)=2$ for all
$i\ne j$.

Therefore both copies of $\mathbb{N}$ are equilateral and thus
embed isometrically into $\ell_1$. On the other hand, since by
Corollary~\ref{C:L1}, $L_1$ is stable, the distortion of any
embedding of the set constructed in Example \ref{E:UnstUnion} into
$L_1$ is at least $3$.
\end{example}

Of course the same example can be used for any stable metric space
containing an isometric copy of a countable equilateral set. Known
theory  \cite{Gar82,Gue92} implies that, for example,  spaces
$L_p$ for $1\le p<\infty$ satisfy this condition.\medskip

Finally we would like to mention that the distortion $3$ in
Example~\ref{E:UnstUnion} cannot be increased using the same idea.
Namely we prove:

\begin{proposition}\label{P:NoMoreThan3}
Let $\{x_n\}$ and $\{y_m\}$ be two  bounded sequences in a metric
space $X$. Then
\[\lim_{m,\mv}\lim_{n,\mju}d(x_n,y_m)\le 3 \lim_{n,\mju}\lim_{m,\mv}d(x_n,y_m)\]
for any free ultrafilters $\mju$ and $\mv$ on $\mathbb{N}$.
\end{proposition}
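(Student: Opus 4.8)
The plan is to exploit the two limits with the triangle inequality, using a well-chosen ``reference pair'' of indices to connect the iterated limit taken in one order to the iterated limit taken in the other. Write $\alpha=\lim_{n,\mju}\lim_{m,\mv}d(x_n,y_m)$ and $\beta=\lim_{m,\mv}\lim_{n,\mju}d(x_n,y_m)$; the goal is $\beta\le 3\alpha$. The basic mechanism is that for any metric space, $d(x_n,y_m)\le d(x_n,y_{m'})+d(y_{m'},x_{n'})+d(x_{n'},y_m)$ for any auxiliary indices $n',m'$. The idea is to apply the various ultrafilter limits to this inequality in an order that turns each of the three terms on the right into a quantity comparable to $\alpha$, while the left-hand side, after its own limits, becomes $\beta$.

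Concretely, I would first fix an index $n'$ and take $\lim_{m,\mv}$ of the triangle inequality in the variable $m$ only (with $m'$ still free); then take $\lim_{n,\mju}$ in $n$. The point is to arrange the limits so that the first term $d(x_n,y_{m'})$ contributes a factor governed by $\lim_{n,\mju}\lim_{m,\mv}d(x_n,y_{m'})$-type expressions, i.e. $\alpha$, and similarly for the third term $d(x_{n'},y_m)$. The middle term $d(y_{m'},x_{n'})=d(x_{n'},y_{m'})$ is the subtle one: it links the two free indices $n',m'$, and one must control it using whichever iterated limit applies after the outer limits are taken in $n'$ and $m'$. Since both $\{x_n\}$ and $\{y_m\}$ are bounded, all these limits exist along the ultrafilters, so no existence issues arise; the whole argument is a bookkeeping of which limit is applied when.

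The step I expect to be the main obstacle is getting the \emph{order} of the four limits (in $n$, $m$, $n'$, $m'$) exactly right so that each of the three summands on the right collapses to $\alpha$ rather than to $\beta$. Because limits along distinct free ultrafilters do not commute in general, one cannot freely interchange them, and the inequality must be set up so that every term is evaluated as an \emph{inner} limit in the favorable order. I would handle this by applying the limits from the innermost out: take $\lim_{m,\mv}$ first (this is the inner limit defining $\alpha$ when paired with an outer $\lim_{n,\mju}$), then $\lim_{m',\mv}$, then $\lim_{n,\mju}$, then $\lim_{n',\mju}$, checking at each stage that the surviving terms are of the form $\lim_{\cdot}\lim_{m,\mv}d(\cdot,y_m)$ and hence bounded by $\alpha$. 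The left side, read off with $\lim_{n,\mju}$ applied after $\lim_{m,\mv}$, is exactly $\beta$ --- note this requires reindexing the roles so that $m$ is the outer free index on the left but an inner one on the right, which is the crux of why the bound is $3\alpha$ and not an equality.

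Once the correct ordering is identified, the conclusion $\beta\le\alpha+\alpha+\alpha=3\alpha$ follows directly by applying the iterated ultrafilter limits to the triangle inequality and invoking boundedness to guarantee that every limit exists and is finite.
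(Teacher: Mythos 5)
Your overall strategy---bounding $d(x_{n},y_{m})$ by $d(x_{n},y_{m'})+d(x_{n'},y_{m'})+d(x_{n'},y_{m})$ and then pushing the four ultrafilter limits through this inequality in a suitable order---is viable and can be completed, but the specific order you commit to is wrong, and with that order the argument proves nothing. You propose to apply, from the inside out, $\lim_{m,\mathcal{V}}$, then $\lim_{m',\mathcal{V}}$, then $\lim_{n,\mathcal{U}}$, then $\lim_{n',\mathcal{U}}$. With that order the left-hand side $d(x_{n},y_{m})$ becomes $\lim_{n,\mathcal{U}}\lim_{m,\mathcal{V}}d(x_{n},y_{m})=\alpha$, not $\beta$: in $\beta=\lim_{m,\mathcal{V}}\lim_{n,\mathcal{U}}d(x_{n},y_{m})$ the limit $\lim_{n,\mathcal{U}}$ is the \emph{inner} one, i.e.\ it must be applied \emph{first}, whereas your sentence ``the left side, read off with $\lim_{n,\mathcal{U}}$ applied after $\lim_{m,\mathcal{V}}$, is exactly $\beta$'' describes $\alpha$. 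So your ordering yields only the vacuous $\alpha\le 3\alpha$. The constraints that actually have to be met are: $n$ before $m$ (so the left side becomes $\beta$), $m'$ before $n$ (so $d(x_{n},y_{m'})$ collapses to $\alpha$), $m$ before $n'$ (so $d(x_{n'},y_{m})$ collapses to $\alpha$), and $m'$ before $n'$ (for the middle term). These force the unique admissible order: $m'$ innermost, then $n$, then $m$, then $n'$ outermost. With that order every term on the right becomes $\alpha$ and the left becomes $\beta$, giving $\beta\le 3\alpha$; each application is legitimate because ultrafilter limits of bounded real sequences exist, are monotone, and are additive.

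Once repaired in this way, your proof is genuinely different from the paper's, which does not manipulate iterated ultrafilter limits directly: it first replaces them by ordinary limits along subsequences ($S_m$, $S$, $L_n$, $L$) and then runs an $\varepsilon$-argument, choosing indices $m_N\ge M$, $n_M\ge N$ and an auxiliary index $f$ \emph{adaptively} so that the triangle inequality $d(x_{n_M},y_{m_N})\le d(x_{n_M},y_{f})+d(y_{f},x_{N})+d(x_{N},y_{m_N})$ yields $S-\varepsilon<3(L+\varepsilon)$. The iterated-limit version avoids this adaptive index-chasing entirely, at the price of having to get the order of the four limits exactly right---which is precisely the step you flagged as the crux and where your write-up currently fails.
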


\begin{proof} Let $d_{nm}=d(x_n,y_m)$. Passing to subsequences we may assume that the following limits exist: $\lim_{n\to\infty}d_{nm}=S_m$,
$\lim_{m\to\infty}S_m=S$, $\lim_{m\to\infty}d_{nm}=L_n$,
$\lim_{n\to\infty}L_n=L$, where
$S=\lim_{m,\mv}\lim_{n,\mju}d(x_n,y_m)$ and
$L=\lim_{n,\mju}\lim_{m,\mv}d(x_n,y_m)$.

We need to show $S\le 3L$.

Given $\e>0$,   let $M\in\mathbb{N}$ be such that $S_m>S-\ep$ for
all $m\ge M$ and let $N\in\mathbb{N}$ be such that $L_n<L+\ep$ for
all $n\ge N$.

Let $m_N\in\mathbb{N}$ be such that $m_N\ge M$ and
$d(x_N,y_{m_N})<L+\ep$

Let $n_M\in\mathbb{N}$ be such that $n_M\ge N$ and
$d(x_{n_M},y_{m_N})>S-\ep$.

Finally let $f\in\mathbb{\mathbb{N}}$ be such that
$d(x_{N},y_{f})<L+\ep$ and $d(x_{n_M},y_{f})<L+\ep$.

Using the triangle inequality we get
\[S-\ep<d(x_{n_M},y_{m_N})\le d(x_{n_M},y_{f})+d(y_{f},
x_{N})+d(x_N,y_{m_N})<3(L+\ep).\]

Since $\ep>0$ is arbitrary, we get $S\le 3L$.
\end{proof}

\vspace{9mm} {\em Acknowledgements.} We thank Alexandros
Eskenazis for useful information related to the problems studied
in this paper and Gilles Godefroy for his interest in this
research. The first named author gratefully acknowledges the
support by the National Science Foundation grants NSF DMS-1700176
and DMS-1953773, and the hospitality of the Department of
Mathematics of Miami University where this research started.

\end{large}

\begin{small}

\renewcommand{\refname}{\section{References}}

\end{small}

\textsc{Department of Mathematics and Computer Science, St. John's
University, 8000 Utopia Parkway, Queens, NY 11439, USA} \par
  \textit{E-mail address}: \texttt{ostrovsm@stjohns.edu} \par
\smallskip

\textsc{Department of Mathematics, Miami University, Oxford, OH
45056, USA} \par
  \textit{E-mail address}: \texttt{randrib@miamioh.edu} \par

\end{document}